\newtheorem*{REMARK}{Remark}
\newcommand{\vertiii}[1]{{\left\vert\kern-0.25ex\left\vert\kern-0.25ex\left\vert #1 
    \right\vert\kern-0.25ex\right\vert\kern-0.25ex\right\vert}}
\newcommand{\tube}{{{\mathcal T}_{\lambda^{-\frac12}}(\gamma)}}
\newcommand{\Rtube}{{{\mathcal T}_R(\tilde \gamma)}}
\newcommand{\lapptq}{-{\Delta_{\tilde g}}}
\newcommand{\lat}{-{\Delta_{\tilde g}}}
\newcommand{\cosco}{\cos t \sqrt{ \lat}}
\newcommand{\lessim}{\lesssim}
\newcommand{\1}{{\rm 1\hspace*{-0.4ex}%
\rule{0.1ex}{1.52ex}\hspace*{0.2ex}}}
\renewcommand{\epsilon}{\varepsilon}
\newcommand{\newsection}[1]
{\subsection{#1}\setcounter{theorem}{0} \setcounter{equation}{0}
\par\noindent}
\newtheorem{theorem}{Theorem}
\newtheorem{lemma}[theorem]{Lemma}
\newtheorem{corr}[theorem]{Corollary}
\newtheorem{proposition}[theorem]{Proposition}
\newtheorem{deff}[theorem]{Definition}
\newcommand{\bth}{\begin{theorem}}
\newcommand{\ble}{\begin{lemma}}
\newcommand{\bcor}{\begin{corr}}
\newcommand{\bdeff}{\begin{deff}}
\newcommand{\bprop}{\begin{proposition}}
\newcommand{\ele}{\end{lemma}}
\newcommand{\ecor}{\end{corr}}
\newcommand{\edeff}{\end{deff}}
\newcommand{\eprop}{\end{proposition}}
\newcommand{\cd}{\, \cdot\, }
\newcommand{\Rn}{{\mathbb R}^n}
\newcommand{\la}{\lambda}
\newcommand{\e}{\varepsilon}
\renewcommand{\Pi}{\varPi}
\renewcommand{\epsilon}{\varepsilon}
\newcommand{\R}{{\mathbb R}}
\newcommand{\vp}{\varrho}
\keywords{Eigenfunctions, Kakeya-Nikodym averages}
\subjclass[2010]{Primary 58J51; Secondary 35A99, 42B37}
\begin{document}

\title[Toponogov's Theorem and improved estimates of eigenfunctions]{Concerning Toponogov's Theorem and   
logarithmic  improvement of
estimates of eigenfunctions}

\begin{abstract}
We use Toponogov's triangle comparison theorem from Riemannian geometry along with
quantitative scale oriented variants of classical propagation of singularities arguments
to obtain logarithmic improvements of the Kakeya-Nikodym norms introduced in 
\cite{SKN} for manifolds of nonpositive sectional curvature.  Using these and results from our paper \cite{BS15} we are able to obtain
log-improvements of $L^p(M)$ estimates for such manifolds when $2<p<\tfrac{2(n+1)}{n-1}$.  These in turn imply $(\log\la)^{\sigma_n}$, $\sigma_n\approx n$, improved lower bounds for $L^1$-norms
of eigenfunctions of the estimates of the second
author and Zelditch~\cite{SZ11}, and using a result from Hezari and the second author~\cite{HS},
under this curvature assumption, we are able to improve the lower bounds for the size
of nodal sets of Colding and Minicozzi~\cite{CM} by a factor of $(\log \la)^{\mu}$ for
any $\mu<\tfrac{2(n+1)^2}{n-1}$, if $n\ge3$.
\end{abstract}

\thanks{The authors were supported in part by the NSF grants DMS-1301717 and DMS-1361476, respectively.}

\author[M. D. Blair]{Matthew D. Blair}
\address{Department of Mathematics and Statistics, University of New Mexico, Albuquerque, NM 87131, USA}
\email{blair@math.unm.edu}
\author[C. D. Sogge]{Christopher D. Sogge}
\address{Department of Mathematics, Johns Hopkins University, Baltimore, MD 21093, USA}
\email{sogge@jhu.edu}

\maketitle

\newsection{Introduction}

The purpose of this paper is to show that if $(M,g)$ is a compact Riemannian manifold of dimension $n\ge 2$ then
one obtains logarithmically improved Kakeya-Nikodym bounds of eigenfunctions and appropriate eigenfunctions in all dimensions, as
well as log-improved restriction estimates in two-dimensions.  Using results from a companion paper \cite{BS15}, we deduce that we have
improved $L^p(M)$ estimates for the range $2<p<\tfrac{2(n+1)}{n-1}$.  These imply log-improvements of the  $L^1(M)$  bounds
of the second author and Zelditch~\cite{SZ11}, which in turn imply log-improvements (assuming nonpositive curvature) of the
lower bounds of Colding and Minicozzi~\cite{CM} for the size of nodal sets.  The assumption of nonpositive curvature is needed
since, except for the last estimate regarding nodal sets, all of the estimates are saturated by the highest weight spherical harmonics,
on $S^{n-1}$,
$k^{\frac{n-1}4} \text{Re } (x_1+ix_2)^k$, $k=1,2,\dots$.  They also are saturated by the Gaussian beams constructed by Ralston~\cite{Ral},
which cannot exist on manifolds of nonpositive curvature.

We shall consider $L^2$-normalized eigenfunctions on a Riemannian manifold $(M,g)$ of dimension
$n\ge 2$.  So we are considering functions satisfying
$$(\Delta_g+\la^2)e_\la=0, \quad \text{and } \, \, \int_M|e_\la|^2 \, dV_g=1,$$
where $\Delta_g$ and $dV_g$ of course are the Laplace-Beltrami operator and volume element associated
with the metric $g$ on $M$, respectively.  So in our case $\la$ is the frequency of the eigenvalue, and also
$e_\la$ is an eigenfunction with eigenvalue $\la$ of $P=\sqrt{-\Delta_g}$.

In what follows, we let $\varPi$ denote the space of all geodesic segments of length %$\text{Inj }M/10$, where
one, assuming that $\text{Inj }M\ge 10$, where
$\text{Inj }M$ denotes the injectivity radius of $(M,g)$.  
Otherwise, we shall modify things so that the geodesics in $\varPi$ are of length $\text{Inj }M/10$.  This
slight ambiguity is caused by the fact that we shall want to assume eventually that the sectional curvatures
of $(M,g)$ are pinched below by $-1$ to simplify the statement of the cone comparison result that we shall employ, which play a
critical role in our analysis.

If
$\gamma\in \varPi$,  ${\mathcal T}_\varepsilon(\gamma)$ denotes a geodesic tube of width $\varepsilon$ about $\gamma\in \varPi$,
then the Kakeya-Nikodym
%\footnote{For expository reasons
%the Kakeya-Nikodym norms in our companion paper \cite{BS15} differ from
%these by a factor of $\la^{\frac{n-1}4}$ so that in \cite{BS15} the resulting
%expression is essentially an $L^2$ average.} 
norm of our eigenfunctions  defined by
\begin{equation}\label{1.1} \vertiii{e_\la}_{{KN}} =\Bigl(\, \sup_{\gamma \in \varPi} \int_{\tube}|e_\la|^2 \, dV_g \, \Bigr)^{\frac12},
\end{equation}
were introduced by one of us in \cite{SKN}, following earlier related work of Bourgain~\cite{Bo}, as a way of controlling the $L^p(M)$ norms
of eigenfunctions.  Although not explicitly stated, the inequalities proved in \cite{SKN} yield
\begin{equation}\label{1.2}
\|e_\la\|_{L^4(M)} \le C\la^{\frac18}\vertiii{e_\la}_{KN}^{\frac14}, \quad \text{if } \, n=2,
\end{equation}
and hence improvements over the trivial estimate
$$\vertiii{e_\la}_{KN}\le 1,$$
would yield improvements over the second author's earlier bounds \cite{Seig} $\|e_\la\|_4=O(\la^{\frac18})$ (saturated on $S^2$).
By interpolating with the $L^6$ estimate there, one would also get improvements for the full range $2<p<6$.  The trivial Kakeya-Nikodym  bounds
were improved by the second author and Zelditch in \cite{SZKN}, who showed that, in two dimensions, one has $\vertiii{e_\la}_{KN}=o(1)$,
and, as a result, the improvement $\|e_\la\|_4=o(\la^{\frac18})$, under the assumption of nonpositive curvature.  These sorts
of results were extended to higher dimensions by the authors in \cite{BS}.  In all cases, though, even though we could show that
the various norms were relatively small as $\la\to \infty$, there was no control on the rate of decay of the Kakeya-Nikodym
norms or on the way that the constants in the $L^p$ improvements over the ones in \cite{Seig} go to zero as $\la\to \infty$.

The purpose of this paper is to establish that there are improvements in turns of powers of $\log \la$ for all of these things.  Our main
result, which along with estimates in a companion paper, yields these improved bounds. 

\begin{theorem}\label{mainthm}  Suppose $(M,g)$ has nonpositive sectional curvatures.  Then
\begin{equation}\label{K1}
\sup_{\gamma\in \varPi}\int_\tube |e_\la|^2 \, dV \lesssim c(\lambda),
\end{equation}
for $\la\gg 1$ with
$$c(\lambda)=
\begin{cases}
(\log \la)^{-\frac12}, \quad \text{if  } \, n=2
\\
(\log \la)^{-1} \log\log \la, \quad \text{if  }\, n=3
\\
(\log \la)^{-1}, \quad \text{if  }\, n\ge 4.
\end{cases}
$$
Moreover, if $n=2$, we have
\begin{equation}\label{K2}
\sup_{\gamma\in \varPi} \int_\gamma |e_\la|^2 \, ds \le C\la^{\frac12} c(\lambda).
%\le C(\la/\log \la)^{\frac12},
\end{equation}
The above estimates hold as well when $e_\lambda$ is replaced by a quasi-mode satisfying
\begin{equation}\label{qm} \|\psi_\la\|_{L^2(M)} + (\log\la/\la) \bigl\|(\Delta_g+\lambda^2)\psi_\la\bigr\|_{L^2(M)}\le 1.\end{equation}
\end{theorem}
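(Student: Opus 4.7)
The plan is to use a spectral reproducing formula at the time scale $T=c_0\log\la$, built from the half-wave group, and then lift the resulting propagator to the universal cover $(\tilde M,\tilde g)$ of $M$ so that Toponogov's triangle comparison theorem can control the sum over deck transformations. Fix an even $\rho\in\mathcal{S}(\mathbb{R})$ with $\rho(0)=1$ and $\hat\rho\in C^\infty_0((-1,1))$, and set $\chi=\rho^2$. A $TT^\ast$ reduction turns \eqref{K1} into the operator-norm bound $\|1_\tube\,\chi(T(P-\la))\,1_\tube\|_{L^2\to L^2}\lesssim c(\la)$. The quasi-mode hypothesis \eqref{qm} gives $\|(P-\la)\psi_\la\|_2\lesssim(\log\la)^{-1}$, and a standard spectral argument then shows that $\rho(T(P-\la))\psi_\la$ approximates $\psi_\la$ closely enough that the extension from eigenfunctions to quasi-modes is automatic.

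The Schwartz kernel of $\chi(T(P-\la))$ has the Fourier representation
$$\frac{1}{2\pi T}\int e^{-i\la t}\,\hat\chi(t/T)\,(\cos tP)(x,y)\,dt,$$
after combining positive and negative frequency parts. The short-time piece $|t|\le 1$ is controlled by the classical Hadamard parametrix on $(M,g)$ and contributes a summand of size $O(T^{-1})$ to the operator norm. For $|t|>1$ one passes to $(\tilde M,\tilde g)$, which is a Cartan--Hadamard manifold by the nonpositive curvature assumption, and uses the Poisson-type summation $(\cos tP)(x,y)=\sum_{\alpha\in\Gamma}(\cos t\tilde P)(\tilde x,\alpha\tilde y)$ over the deck group $\Gamma$ with fixed lifts $\tilde x,\tilde y$. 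Hadamard's construction on the simply connected $\tilde M$, combined with a non-stationary phase analysis in $t$ against $e^{-i\la t}$, shows that after the $t$-integration the contribution of each $\alpha$ is essentially an amplitude of size $\la^{(n-1)/2}d_{\tilde g}(\tilde x,\alpha\tilde y)^{-(n-1)/2}$ supported in a $\la^{-1/2}$-neighborhood of the sphere of radius $\approx d_{\tilde g}(\tilde x,\alpha\tilde y)\in[1,T]$ about $\tilde x$.

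The crux---and the main obstacle---is the counting step: for $\tilde x$ in a fixed lift $\tilde{\mathcal T}_{\la^{-1/2}}(\tilde\gamma)$ of the tube, how many translated tubes $\alpha\cdot\tilde{\mathcal T}_{\la^{-1/2}}(\tilde\gamma)$ can meet $B(\tilde x,T)$ in a way whose central geodesic is aligned, to within $\la^{-1/2}$, with a radial direction out of $\tilde x$ over a length of order $T$? Under the curvature pinch $-1\le K_g\le 0$, Toponogov's comparison theorem applied to the geodesic triangle with vertices $\tilde x$, a point on $\tilde\gamma$, and a point on $\alpha\tilde\gamma$---sharing two almost-parallel long sides at transverse separation $\la^{-1/2}$---forces a quantitative rigidity on such $\alpha$: only $O(1)$ choices contribute when $n\ge 4$; in $n=3$ a critical scale admits up to $O(\log\log\la)$ near-alignments; and in $n=2$ the slower $t^{-1/2}$ dispersion of the wave kernel reduces the available gain from $T^{-1}$ to $T^{-1/2}$. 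Schur's test, combined with the $T^{-1}$ normalization of the reproducing kernel, then assembles these counts into the dimension-dependent $c(\la)$. The geodesic-average estimate \eqref{K2} for $n=2$ is obtained by running the same analysis with $1_\gamma$ in place of $1_\tube$ on each side, picking up the extra $\la^{1/2}$ factor from the transverse concentration of the wave kernel on the light cone against the geodesic.
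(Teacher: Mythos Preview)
Your outline captures the reproducing formula, the $TT^*$ reduction, the lift to the universal cover, and the Hadamard parametrix bound $|K|\lesssim T^{-1}\la^{(n-1)/2}d_{\tilde g}^{-(n-1)/2}$, all of which the paper also uses. But the ``crux'' paragraph misidentifies both \emph{where} Toponogov enters and \emph{why} the dimension-dependent $c(\la)$ appears, and it omits the one genuinely new analytic ingredient.

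The missing device is an angular microlocalization $Q_{\theta,\la}$ to a $\theta\approx\la^{-\delta}$ cone about $S^*\gamma$. The operator $\chi(T(\la-P))$ is split as $Q_{\theta,\la}\circ\chi(T(\la-P))$ plus a remainder $(I-Q_{\theta,\la})\circ\chi(T(\la-P))$. The remainder is handled by an ``escape-time'' lemma (valid on \emph{any} manifold, no curvature needed): the $(I-Q_{\theta,\la})$ piece composed with a unit-time wave propagator has $L^2(\tube)$ norm $O(\la^{-1/4}\theta^{-1/2})$, so after summing $O(T)$ unit intervals one still beats $c(\la)$. Toponogov's role is \emph{not} a counting argument for aligned translates; it is the contrapositive statement that any $\alpha(\tilde y)$ lying \emph{outside} a fixed-width tube $\mathcal T_R(\tilde\gamma)$ must be seen from $\tilde x$ at an angle $\gtrsim e^{-T}\gg\theta$ from $\tilde\gamma$. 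Combined with the $Q_{\theta,\la}$ cutoff and bounds on derivatives of $d_{\tilde g}$ (B\'erard), integration by parts then shows every such off-tube $\alpha$ contributes $O(\la^{-1})$, killing the exponentially many of them.

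What remains are the $\alpha$ with $\alpha(D)\cap\mathcal T_R(\tilde\gamma)\ne\emptyset$. A simple volume argument---not Toponogov---gives $\#\{\alpha\in\Gamma_{\mathcal T_R(\tilde\gamma)}:d_{\tilde g}(0,\alpha(0))\in[2^k,2^{k+1}]\}\le C\,2^k$. The $c(\la)$ then comes from the dyadic sum
\[
T^{-1}\sum_{1\le 2^k\le T} 2^k\cdot 2^{-k(n-1)/2},
\]
which is $\approx T^{-1/2}$ when $n=2$, $\approx T^{-1}\log T$ when $n=3$, and $\approx T^{-1}$ when $n\ge4$. Your explanation (``$O(1)$ choices when $n\ge4$'', ``$O(\log\log\la)$ near-alignments when $n=3$'') does not match this mechanism. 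Without the $Q_{\theta,\la}$ cutoff there is no way to discard the off-tube $\alpha$, and the sum over all $\alpha$ in $B(0,T)$ of $d_{\tilde g}^{-(n-1)/2}$ diverges exponentially; the scheme as written does not close.
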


The estimate \eqref{K2} is stronger than \eqref{K1} when $n=2$.  Moreover,
it represents a logarithmic improvement over the two-dimensional restriction estimates
of Burq, G\'erard and Tzvetkov~\cite{BGT}.  As we pointed out in \cite{BS} in higher dimensions,
$n\ge 4$, restriction estimates, as opposed to the Kakeya-Nikodym tube estimates as in \eqref{K1}, are too
singular to control $L^p$ norms.  In these dimensions they are saturated by eigenfunctions matching the
profile of zonal spherical harmonics, rather than highest weight spherical harmonics which
saturate the Kakeya-Nikodym norms in \eqref{K1}.  In this case, it is also a bit more straightforward to obtain
logarithmic improvements over the geodesic restriction estimates in \cite{BGT}.  This was done
by Chen~\cite{Ch} when $n>3$.  Chen and one of us \cite{CS} also showed that when $n=3$ one could improve
on the universal bounds (saturated in this case by both zonal harmonics and highest weight spherical
harmonics) assuming that $(M,g)$ is of constant nonpositive curvature.  Whether one can get logarithmic
improvements for either constant or variable nonpositive curvature is an interesting open problem in this dimension.  
%It is more difficult than \eqref{K2} due to the
%fact that favorable estimates of solutions to the wave equation to submanifolds of codimension
%two is harder to obtain than ones of codimension one (which are used in the proof of \eqref{K2}).

The proof of these results will follow closely the general scheme introduced in \cite{SKN} and \cite{SZKN}.  The new 
ingredient is that we  are using classical triangle comparison theorems from Riemannian geometry
to make tighter pseudo-differential cutoffs allowing us to use the time-averaging method over
logarithmic  time intervals (as opposed to large ones basically not depending on $\la$).  We
use the universal cover of $(M,g)$ to break up the operators that we use to obtain
our estimates into a number of pieces.  We 
use quantitative and scale oriented variants of classical propagation of singularities arguments
(i.e., integration by parts)
 to
handle what turn out to be the small, but numerous, ``error'' terms which 
arise from terms associated with a portion of the universal cover not contained 
 in a natural cone of small apeture about the
geodesic.
We can handle  the relatively few remaining ``local terms'' in standard
ways.

Before we turn to the proof of Theorem let us state a couple of corollaries of our
main theorem and results from our companion paper \cite{BS15}.  The first
concerns logarithmically improved $L^p$ norms of eigenfunctions and appropriate quasi-modes.

\begin{corr}\label{Lpcorr}  Assume, as above, that $(M,g)$ is a compact $n\ge2$ dimensional
manifold with nonpositive sectional curvatures.  Then for any $2<p<\tfrac{2(n+1)}{n-1}$
there is a number $\sigma(p,n)>0$ so that
\begin{equation}\label{lp}
\|e_\la\|_{L^p(M)}\lesssim \la^{\frac{n-1}2(\frac12-\frac1p)} \, \bigl(\log \la\bigr)^{-\sigma(p,n)}.
\end{equation}
Furthermore, if $\tfrac{2(n+2)}n <p<\tfrac{2(n+1)}{n-1}$, one can take
\begin{equation}\label{sigma}   \quad
\sigma(p,n)=
\begin{cases}
\tfrac{n+1}{n-1}(\tfrac1p-\tfrac{n-1}{2(n+1)}), \, \, \, \text{if } \, \,  n\ge 4,
\\ \\ 
\text{ any } \, \, \sigma(p,3)<2(\tfrac1p-\tfrac14), \, \, \, \text{if } \, \, n=3,
\\  \\
\tfrac32(\tfrac1p-\tfrac16), \, \, \, \text{if } \, \, n=2. \end{cases}
%\\ \text{and any } \, \, \sigma(p,3)<2(\tfrac1p-\tfrac14), 
%\, \, \text{if } \, \, n=3.
\end{equation}
The above estimates hold as well when $e_\la$ is replaced by a quasi-mode
satisfying \eqref{qm}.
\end{corr}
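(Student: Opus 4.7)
The plan is to combine the Kakeya-Nikodym bound from Theorem \ref{mainthm} (and the geodesic restriction bound \eqref{K2} when $n=2$) with the conditional $L^p$ estimates proved in the companion paper \cite{BS15}. Those conditional estimates furnish the reverse implication: any gain over the universal bound $\vertiii{e_\la}_{KN}\le 1$ (respectively over the Burq--G\'erard--Tzvetkov restriction bound when $n=2$) can be upgraded to a corresponding gain over the baseline bound $\|e_\la\|_{L^p(M)}\lesssim \la^{\frac{n-1}{2}(\frac12-\frac1p)}$ of the second author, for every $p$ strictly inside the subcritical range $\tfrac{2(n+2)}{n}<p<\tfrac{2(n+1)}{n-1}$.

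First, for such $p$ I would invoke the main inequality of \cite{BS15}, which schematically takes the form
\[\|e_\la\|_{L^p(M)}\lesssim \la^{\frac{n-1}{2}\left(\frac12-\frac1p\right)}\cdot \vertiii{e_\la}_{KN}^{\,\beta(p,n)}\]
for an explicit exponent $\beta(p,n)>0$ (with an analogous inequality using the normalized restriction quantity from \eqref{K2} when $n=2$). Substituting the bound $\vertiii{e_\la}_{KN}\lesssim c(\la)^{1/2}$ from Theorem \ref{mainthm}, and reading off the power of $\log \la$, produces \eqref{lp} with the exponent $\sigma(p,n)$ determined by $\beta(p,n)$; matching these values with the formulas in \eqref{sigma} is then a routine computation. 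The factor $\log\log\la$ present in $c(\la)$ when $n=3$ is precisely what forces the strict inequality $\sigma(p,3)<2(\tfrac1p-\tfrac14)$ rather than an equality.

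Second, to extend to the full subcritical range $2<p<\tfrac{2(n+1)}{n-1}$, I would interpolate via the log-convexity of $L^p$-norms between the trivial identity $\|e_\la\|_{L^2(M)}=1$ and the bound just established at a fixed $p_0$ slightly greater than $\tfrac{2(n+2)}{n}$. This yields a strictly positive (though unspecified) $\sigma(p,n)$ for every $p\in(2,\tfrac{2(n+2)}{n}]$, completing the first assertion of the corollary. The extension to quasi-modes satisfying \eqref{qm} is automatic, since Theorem \ref{mainthm} is proved in that generality and the arguments of \cite{BS15} preserve the quasi-mode hypothesis.

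The main obstacle is not in this corollary, which is essentially a bookkeeping exercise once both inputs are available, but rather in the companion paper \cite{BS15}: establishing the Kakeya-Nikodym-to-$L^p$ implication uniformly across the subcritical range requires a quantitative bilinear/multilinear analysis in the spirit of Tao--Vargas--Vega and Lee, and it is there that the precise form of $\beta(p,n)$---and hence the explicit exponents in \eqref{sigma}---is determined. Given the statement of \cite{BS15}, the only nontrivial step in the proof of the corollary is verifying that the exponent arithmetic in the substitution above reproduces exactly the values listed in \eqref{sigma}.
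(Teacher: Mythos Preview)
Your proposal is correct and matches the paper's approach: invoke the Kakeya--Nikodym-to-$L^p$ inequality \eqref{mlKN} from \cite{BS15} with exponent $\beta(p,n)=\tfrac{2(n+1)}{n-1}\bigl(\tfrac1p-\tfrac{n-1}{2(n+1)}\bigr)$, substitute $\vertiii{e_\la}_{KN}\lesssim c(\la)^{1/2}$ from \eqref{K1}, and then interpolate with the trivial $L^2$ bound for $2<p\le \tfrac{2(n+2)}{n}$; the quasi-mode case goes through the operator form of the \cite{BS15} estimate and the spectral projector reduction, exactly as the paper does in \S2. One small remark: the paper uses only the Kakeya--Nikodym bound \eqref{K1} in every dimension, including $n=2$; the restriction estimate \eqref{K2} is not needed here, since $\beta(p,2)=6(\tfrac1p-\tfrac16)$ combined with $c(\la)^{1/2}=(\log\la)^{-1/4}$ already produces $\sigma(p,2)=\tfrac32(\tfrac1p-\tfrac16)$.
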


Let us show how we can obtain these $L^p$ norms for eigenfunctions and postpone
the discussion of quasi-modes for a bit.  To prove \eqref{lp} we shall use \eqref{K1}
and the following estimate from our companion paper \cite{BS15},
\begin{equation}\label{mlKN}
\|e_\la\|_{L^p(M)}\lesssim \la^{\frac{n-1}2(\frac12-\frac1p)}\vertiii{e_\la}_{KN}^{\frac{2(n+1)}{n-1}(\frac1p-\frac{n-1}{2(n+1)})},
\quad \text{if } \, \, \tfrac{2(n+2)}n<p<\tfrac{2(n+1)}{n-1}.
\end{equation}
Using this estimate and our Kakeya-Nikodym bounds \eqref{K1} we immediately
get \eqref{lp} for the range $\tfrac{2(n+2)}n<p<\tfrac{2(n+1)}{n-1}$, and the estimates
from the remaining range $2<p\le \tfrac{2(n+2)}n$ follow from interpolation
with the trivial bound $\|e_\la\|_2\le 1$.

Using these estimates we can in turn get improvements for lower bounds of the 
$L^1$ norms of eigenfunctions and the size of their nodal sets under our curvature
assumptions:

\begin{corr}
Assume that $(M,g)$ is as above.  Then
\begin{equation}\label{l1}
\la^{-\frac{n-1}4} (\log\la)^{\mu} \lesssim \|e_\la\|_{L^1(M)},
\end{equation}
for any $\mu<\mu_n$ with
\begin{equation*}   \quad
\mu_n=
\begin{cases}
\tfrac{(n+1)^2}{n-1}, \quad \text{if } \, \, n\ge 3
\\ \\
\tfrac{(n+1)^2}{2(n-1)}, \quad \text{if } \, \, n=2.
\end{cases}
\end{equation*}
Consequently, if $e_\la$ is a real-valued eigenfunction and $|Z_\la|$ denotes
the $(n-1)$-dimensional Hausdorff measure of its nodal set, $Z_\la = \{x: \, e_\la(x)=0\}$, we have
\begin{equation}\label{nodal}
\la^{1-\frac{n-1}2} \bigl(\log \la\bigr)^{2\mu} \lesssim |Z_\la|,
\end{equation}
when $\mu<\mu_n$.  In particular, when $n=3$, $(\log \la)^{r}\lesssim |Z_\la|$ for
all $r<16$.
\end{corr}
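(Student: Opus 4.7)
The plan is to derive the $L^1$ lower bound \eqref{l1} by H\"older interpolation against the improved $L^p$ bounds of Corollary \ref{Lpcorr}, and then pass to the nodal set estimate \eqref{nodal} via the inequality of Hezari and the second author. For any $p > 2$, H\"older's inequality
\[
\|f\|_{L^2} \leq \|f\|_{L^1}^{\theta}\|f\|_{L^p}^{1-\theta}, \qquad \theta = \tfrac{p-2}{2(p-1)},
\]
combined with the normalization $\|e_\la\|_{L^2(M)} = 1$, rearranges to
\[
\|e_\la\|_{L^1(M)} \geq \|e_\la\|_{L^p(M)}^{-p/(p-2)}.
\]
Inserting \eqref{lp} from Corollary \ref{Lpcorr} and using the elementary identity $\tfrac{n-1}{2}\bigl(\tfrac12 - \tfrac1p\bigr)\cdot \tfrac{p}{p-2} = \tfrac{n-1}{4}$ (valid for every $p > 2$), we obtain
\[
\|e_\la\|_{L^1(M)} \gtrsim \la^{-(n-1)/4}\,(\log\la)^{\sigma(p,n)\cdot p/(p-2)},
\]
in which the $\la$-power already matches the target $\la^{-(n-1)/4}$ independently of the choice of $p$.

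The second step is to maximize the log-exponent $\sigma(p,n)\cdot p/(p-2)$ over the admissible range $p \in (2,\tfrac{2(n+1)}{n-1})$, using the explicit formula from Corollary \ref{Lpcorr} on $(\tfrac{2(n+2)}{n},\tfrac{2(n+1)}{n-1})$ and its extension by interpolation with the trivial $L^2$ bound on $(2,\tfrac{2(n+2)}{n}]$. The split between the values of $\mu_n$ in dimension two versus dimensions $n \geq 3$ tracks the corresponding split in the Kakeya--Nikodym rate $c(\la)$ of Theorem \ref{mainthm}, which governs $\sigma(p,n)$ via the relation $\vertiii{e_\la}_{KN}^2 \leq c(\la)$ and the inequality \eqref{mlKN}.

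Finally, the nodal-set estimate \eqref{nodal} follows from the Hezari--Sogge inequality
\[
|Z_\la| \gtrsim \la\,\|e_\la\|_{L^1(M)}^2
\]
of \cite{HS}: squaring \eqref{l1} and multiplying by $\la$ yields $|Z_\la| \gtrsim \la^{1-(n-1)/2}(\log\la)^{2\mu}$, so the nodal-set exponent is exactly twice the $L^1$-exponent as claimed, and the specialization $n=3$ with $r < 16 = 2\mu_3$ is immediate.

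The main obstacle I anticipate is carrying out the optimization over $p$ sharply enough to realize the stated $\mu_n$. Since $\sigma(p,n)$ degenerates to zero as $p$ approaches the critical index $p_c = \tfrac{2(n+1)}{n-1}$ while the interpolation weight $p/(p-2)$ attains its minimum $\tfrac{n+1}{2}$ precisely at $p_c$, the optimal $p$ must lie strictly inside the admissible interval; pinning it down so as to recover $\mu_n = (n+1)^2/(n-1)$ for $n \geq 3$ and $\mu_n = (n+1)^2/(2(n-1))$ for $n=2$ requires tracking the full dependence of $\sigma(p,n)$ on both $p$ and the Kakeya--Nikodym rate $c(\la)$.
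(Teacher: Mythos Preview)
Your approach coincides with the paper's: the same H\"older interpolation produces the $L^1$ lower bound with log-exponent $\sigma(p,n)\cdot p/(p-2)$, and the nodal-set estimate follows from the Hezari--Sogge inequality $\la\,\|e_\la\|_{L^1}^2 \lesssim |Z_\la|$ exactly as you indicate.

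The only correction concerns the optimization you flag as the main obstacle. On the range $\tfrac{2(n+2)}n<p<\tfrac{2(n+1)}{n-1}$ the explicit formula for $\sigma(p,n)$ in \eqref{sigma} is affine in $1/p$, and a one-line computation shows that $p\mapsto \sigma(p,n)\cdot \tfrac{p}{p-2}$ is \emph{monotone decreasing} there; hence the supremum is attained in the limit $p\searrow \tfrac{2(n+2)}n$, not at an interior critical point. The paper states exactly this: ``the resulting lower bounds improve as $p\searrow \tfrac{2(n+2)}n$. The power $\mu_n$ represents what we would obtain if \eqref{lp} were valid at the endpoint.'' Since the claim is for every $\mu<\mu_n$, one simply takes $p$ slightly above $\tfrac{2(n+2)}n$; there is no need to track $\sigma(p,n)$ on $(2,\tfrac{2(n+2)}n]$ through interpolation, and no delicate balance between $\sigma(p,n)$ and $p/(p-2)$ is required.
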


The lower bound of $\la^{1-\frac{n-1}2}\lesssim |Z_\la|$ is due to Colding and Minicozzi~\cite{CM} and it is
the best known lower bound for general $C^\infty$ manifolds.  An alternate proof of this lower bound was later found by 
the author and Zelditch~\cite{SZnod2}.  In the real analytic case Donnelly and Fefferman~\cite{DF} showed that
$|Z_\la|\approx \la$.  Earlier Yau~\cite{Yau} had conjectured this bound for general smooth Riemannian manifolds.

To prove \eqref{l1}, we first notice that, by
H\"older's inequality
$$1=\|e_\la\|_{L^2(M)}\le \|e_\la\|_{L^1(M)}^{\frac{p-2}{2(p-1)}}\|e_\la\|_{L^p(M)}^{\frac{p}{2(p-1)}},
$$
and so
$$\la^{-\frac{n-1}4}
\, \bigl(\la^{-\frac{n-1}2(\frac12-\frac1p)}\|e_\la\|_{L^p(M)}\bigr)^{-\frac{p}{p-2}}\le \|e_\la\|_{L^1(M)}.$$
To get \eqref{l1} we plug the estimates \eqref{lp} into this inequality and realize that the resulting
lower bounds improve as $p\searrow \tfrac{2(n+2)}n$.  The power $\mu_n$ represents
what we would obtain if \eqref{lp} were valid at the endpoint, which thus gives us \eqref{l1}.
To prove \eqref{nodal} we just use the lower bound
of Hezari and the second author~\cite{HS},
$$\la \|e_\la\|_{L^1(M)}^2 \lesssim |Z_\la|,$$
which was proved using ideas from an earlier work of the second author and
Zelditch~\cite{SZ11}.

In a recent work, Hezari and Rivi\`ere~\cite{HR} were able to obtain log-improvements for
$L^p$-norms of a subsequence of eigenfunctions of density one assuming that the sectional
curvatures of $(M,g)$ are strictly negative.  Our results relax the latter condition to our
 assumption that the curvatures be nonpositive, but, more significantly, we are able to
handle all eigenfunctions and not just ones corresponding to a density one subsequence of
eigenvalues.  The results of Hezari and Rivi\`ere~\cite{HR} are based on obtaining non-trivial 
log-improvements of $L^2$-norms of eigenfunctions over shrinking balls (which had also been obtained earlier and independently
by Han~\cite{H}) and then using an estimate that relates such estimates to $L^p$-norms
(see also \cite{Sball} for the latter).  Over the years, there have been many works on improving $L^p$-norms for relatively large exponents
$p>\tfrac{2(n+1)}{n-1}$, including \cite{HT}, \cite{STZ}, \cite{SZ}, \cite{SZRA} and \cite{SZRA2}.
 Hassell and Tacy~\cite{HT} extended B\'erard's sup-norm estimate, implicit in \cite{Berard}, by
 showing that there are $(\log\la)^{-1/2}$ improvements over the bounds of \cite{Seig} under
 the assumption of nonpositive curvature for all $p>\tfrac{2(n+1)}{n-1}$.  The endpoint case where $p=\tfrac{2(n+1)}{n-1}$
 is the remaining case where general results concerning improvements 
 for the full sequence
 of eigenfunctions under natural geometric assumptions
 remains open.

\bigskip

%In what follows, we shall denote $A\lesssim B$ when $A$ is smaller than a constant times $B$.  Also, given a metric $\tilde g$ 
%on a Riemannian manifold, $d_{\tilde g}(\cd, \cd)$ denotes the associated Riemannian distance function, and if $P$ is a point
%in the manifold,  $B_{\tilde g}(P;r)$ denotes the set of points of distance $\le r$ from $P$, i.e., the geodesic ball of radius $r$ about
%$P$.

\newsection{Some reductions and tools}

To prove \eqref{K1}, we shall use the fact that if $\rho\in {\mathcal S}(\R)$ satisfies
\begin{equation}\label{K3}
\rho(0)=1 \quad \text{supp }\widehat \rho \subset [-1/2,1/2],
\end{equation}
then
\begin{equation}\label{K4}\rho(T(\la-P))e_\la = e_\la \quad \text{if } \, \, P=\sqrt{-\Delta_g}.
\end{equation}
Consequently, we would have the estimate \eqref{K1} for eigenfunctions if we could show that we have
\begin{equation}\label{K1'} %\tag{1.1$'$}
\|\rho(T(\la-P))f\|_{L^2(\tube)}\lesssim \sqrt{c(\la)} \, \|f\|_{L^2(M)},
\quad T\approx \log \lambda,
\end{equation}
where the constants involved are independent of $\gamma\in \varPi$.

Let us also see why \eqref{K1'} implies that \eqref{K1} also holds for quasi-modes
satisfying \eqref{qm}.  
Let $\{e_j\}$ denote an orthonormal basis of eigenfunctions with eigenvalues $\la_j\to \infty$ and let
$E_j$ denote the projection onto the $j$th eigenspace.
Then since $\rho(0)=1$ and since $T\approx \log \la$, it is not difficult to see that if we define the spectral
projectors associated to windows of width $(\log \la)^{-1}$, i.e.,
$$E_{[\la,\la+(\log\la)^{-1}]} \,  f=\sum_{\la_j\in [\la,\la+(\log\la)^{-1}]}E_jf,$$
we have
\begin{equation}\label{qmbd}
\|E_{[\la,\la+(\log \la)^{-1}]} \, f\|_{L^2(\tube)} \lesssim \sqrt{c(\la)} \|f\|_{L^2(M)}.
\end{equation}
Using this it is very simple to deduce that \eqref{K1} also must hold for functions $\psi_\la$ satisfying \eqref{qm}.
(See \cite{SZqm} for similar arguments.)  

The $L^p$ estimates for quasi-modes are proved by a similar argument.
Theorem 1.1 in  \cite{BS15} shows that 
for $T$ as above
\begin{multline*}\|\rho(T(\la-P))f\|_{L^p(M)}
\lesssim \la^{\frac{n-1}2(\frac12-\frac1p)}\vertiii{\rho(T(\la-P))f}_{KN}^{\frac{2(n+1)}{n-1}(\frac1p-\frac{n-1}{2(n+1)})},
\\ \text{if } \, \, \tfrac{2(n+2)}n<p<\tfrac{2(n+1)}{n-1}.
\end{multline*}
 If we combine this with \eqref{K1'}, we deduce that for exponents such exponents we have
 $$\|\rho(T(\la-P))f\|_{L^p(M)} \lessim 
  \la^{\frac{n-1}2(\frac12-\frac1p)} \, \bigl(\log \la\bigr)^{-\sigma(p,n)} \|f\|_{L^2(M)}.$$
  Since $T\approx \log\la$ and $\rho(0)=1$, this in turn implies that 
  $$\|E_{[\la,\la+(\log \la)^{-1}]} \, f\|_{L^p(M)} \lesssim  \la^{\frac{n-1}2(\frac12-\frac1p)} \, \bigl(\log \la\bigr)^{-\sigma(p,n)} \|f\|_{L^2(M)}.$$
By interpolating with the trivial $L^2$ estimate we see that we also get bounds of this type for $2<p\le \tfrac{2(n+2)}n$.
These $L^2\to L^p$ bounds for 
$(\log\la)^{-1}$ sized spectral projector operators are easily seen to imply that \eqref{lp} is also valid for quasi-modes satisfying \eqref{qm}
(see \cite{SZqm}).

%there is a smooth function $\varrho$ satisfying $\varrho(0)=1$ for which
%$\|\varrho(\la-P)h\|_{L^p(M)}$ is dominated by the right side of \eqref{mlKN} with $e_\la$ replaced by $h$,
%if $\|h\|_{L^2(M)}=1$ and $\tfrac{2(n+2)}n<p<\tfrac{2(n+1)}{n-1}$.  If we combine this with \eqref{K1'}, we deduce that for exponents such exponents we have
%$$\|\rho(T(\la-P))\varrho(\la-P)f\|_{L^p(M)} \lesssim  \la^{\frac{n-1}2(\frac12-\frac1p)} \, \bigl(\log \la\bigr)^{-\sigma(p,n)} \|f\|_{L^2(M)}.$$
%Since $T\approx \log \la$ and $\rho(0)=\varrho(0)=1$, this in turn implies that
%$$\|E_{[\la,\la+(\log \la)^{-1}]} \, f\|_{L^p(M)} \lesssim  \la^{\frac{n-1}2(\frac12-\frac1p)} \, \bigl(\log \la\bigr)^{-\sigma(p,n)} \|f\|_{L^2(M)}.$$
%By interpolating with the trivial $L^2$ estimate we see that we also get bounds of this type for $2<p\le \tfrac{2(n+2)}n$.
%These $L^2\to L^p$ bounds for 
%$(\log\la)^{-1}$ sized spectral projector operators are easily seen to imply that \eqref{lp} is also valid for quasi-modes satisfying \eqref{qm}
%(see \cite{SZqm}).

To prove \eqref{K1'}, we note that this is equivalent to showing that
\begin{multline}\label{K1''} %\tag{1.1$''$}
\|\chi(T(\la-P)) f\|_{L^2(\tube)}  \lesssim c(\la) \, \|f\|_{L^2(M)}, \\
\text{if supp }f\subset \tube \, \, \text{and }
\quad T\approx \log \lambda,
\end{multline}
with $\chi=|\rho|^2$.  We note then that
\begin{equation}\label{K3'}\tag{2.1$'$}
\widehat \chi\subset [-1,1].
\end{equation}

We shall take $T$ to be $c\log \la$ where $c>0$ will be a small constant
depending on a lower bound for the sectional curvatures of $(M,g)$,
among other things.
There is
no loss of generality in assuming, as we shall, that they satisfy
\begin{equation}\label{K5}
K \ge -1,
\end{equation}
and we also recall that we are assuming that they are everywhere nonpositive.

We shall then use  quantitative microlocal analysis  bounds and the following geometric fact,
which is a consequence of Toponogov's triangle comparison theorem, to prove
Theorem~\ref{mainthm}.

\begin{proposition}[{\bf Toponogov}]\label{topprop}  
Equip $\Rn$ with a metric $\tilde g$ of nonpositive curvature
and assume that the sectional curvatures, $K$, of $(\Rn,\tilde g)$ also satisfy
$$K\ge -1.$$
Let $\tilde \gamma(t)$, $t\in \R$, be a geodesic with $\tilde \gamma(0)=P$.  Given $T\gg 1$,
let $C(\theta;T)$, $\theta\ll 1$, denote the set of points $Q\in B_{\tilde g}(P;T)$ which lie on a geodesic
though $P$ which intersects $\tilde \gamma$ of angle $\le \theta$.
Thus, $C(\theta;T)$ is the intersection of the geodesic ball $B_{\tilde g}(P;T)$ with the  cone of aperture $\theta$ about $\tilde \gamma$ with
vertex $P$.
 Fix $R>0$.  Then if 
$${\mathcal T}_R(\tilde \gamma)=\{x\in \Rn: \, d_{\tilde g}(x,\tilde \gamma)\le R\},$$
we have that 
\begin{equation}\label{K6}
C(\theta_T;T)\subset {\mathcal T}_R(\tilde \gamma), \quad \text{if } \, \, \sin \tfrac12\theta_T = \frac{\sinh \tfrac12R}{\sinh T}, 
\quad \text{if } \, T>0.
\end{equation}
%Fix $R\approx 1$.
%Assume that $\Upsilon_T$ is a geodesic right triangle in $M$
%with legs $\gamma_T$ of length $T\gg 1$ and 
%$\gamma_R$ of length $R$ (see Figure~\ref{fig1}).  Then the angle $\theta_T$ subtended by  the hypotenuse
%and $\gamma_T$ satisfies
%\begin{equation}
%\label{K6}
%\sin \theta_T \ge \frac{\sinh R}{\sinh (T+R)}.
%\end{equation}
\end{proposition}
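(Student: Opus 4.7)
The plan is to apply the hinge form of Toponogov's comparison theorem to a single triangle, with the hyperbolic plane $\mathbb{H}^2$ of constant curvature $-1$ as the model space. Since we are assuming $K\ge -1$, Toponogov makes triangles in $(\mathbb{R}^n,\tilde g)$ \emph{thinner} than their hyperbolic comparisons, which is exactly what is needed to bound the distance from $Q$ to $\tilde\gamma$ from above.

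Fix $Q\in C(\theta_T;T)$, set $s=d_{\tilde g}(P,Q)\le T$, and let $\alpha\le \theta_T$ denote the (unsigned) angle at $P$ between the geodesic $\overline{PQ}$ and $\tilde\gamma$. Since $(\mathbb{R}^n,\tilde g)$ is Cartan--Hadamard, $\tilde\gamma$ is defined for all time and we may choose the orientation of $\tilde\gamma$ so that $Q_1:=\tilde\gamma(s)$ makes the angle at $P$ in the geodesic triangle $PQQ_1$ equal to $\alpha$. Since $Q_1\in\tilde\gamma$, it suffices to prove $d_{\tilde g}(Q,Q_1)\le R$.

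Now I would invoke Toponogov's hinge comparison (using the lower curvature bound $K\ge -1$) to compare $PQQ_1$ to the isoceles triangle in $\mathbb{H}^2$ with the same two side lengths $s$ meeting in the same angle $\alpha$; the opposite side of that model triangle, call it $\bar d$, then satisfies $d_{\tilde g}(Q,Q_1)\le \bar d$. Dropping the perpendicular from the apex bisects both the angle $\alpha$ and the opposite side $\bar d$, producing a right hyperbolic triangle with hypotenuse $s$ and a leg of length $\bar d/2$ opposite the angle $\alpha/2$. The standard hyperbolic right-triangle identity $\sin A=\sinh a/\sinh c$ then yields
\begin{equation*}
\sinh(\bar d/2)=\sinh(s)\,\sin(\alpha/2).
\end{equation*}
Using $s\le T$, $\alpha\le\theta_T$, monotonicity, and the definition of $\theta_T$, we get
\begin{equation*}
\sinh(\bar d/2)\le \sinh(T)\,\sin(\theta_T/2)=\sinh(R/2),
\end{equation*}
whence $\bar d\le R$. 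Therefore $d_{\tilde g}(Q,\tilde\gamma)\le d_{\tilde g}(Q,Q_1)\le R$, proving \eqref{K6}.

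The substance of the argument is a single correct application of Toponogov; the rest is plane hyperbolic trigonometry. The only place where real care is needed is ensuring that the \emph{lower} curvature bound $K\ge -1$ is the hypothesis that applies (so that the hinge inequality goes in the direction $d_{\tilde g}(Q,Q_1)\le \bar d$, not the reverse). The role of nonpositive curvature in the statement is indirect: it guarantees that $(\mathbb{R}^n,\tilde g)$ is Cartan--Hadamard, so that geodesics from $P$ are unique minimizers and the cone $C(\theta;T)$, as well as the triangle $PQQ_1$, are well defined without ambiguity.
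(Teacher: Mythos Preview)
Your proof is correct and uses essentially the same ingredients as the paper: an isosceles geodesic triangle with apex at $P$, Toponogov's hinge comparison against $\mathbb{H}^2$ (from the lower bound $K\ge -1$), and the hyperbolic right-triangle identity $\sin(\alpha/2)=\sinh(\bar d/2)/\sinh s$ obtained by bisecting the apex angle. The only organizational difference is that the paper first reduces to points at distance exactly $T$ and then argues via the maximal angle $\Theta_T$ for which the cone lies in the tube, showing $\Theta_T\ge\theta_T$; you instead handle an arbitrary $Q$ at distance $s\le T$ directly, which is slightly more streamlined but amounts to the same computation.
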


%\begin{figure}\label{fig1}
%\centering
%\includegraphics[height=5cm]{Trianglefig.pdf}
%\caption{$\Upsilon_T$}
%\end{figure}

\medskip

\noindent{\bf Remark:}  To simplify the notation we are assuming throughout that the nonpositive curvature is pinched below by $-1$.  If we assumed
the nonpositive sectional curvatures were bounded below by $-\kappa^2$, then the proof of Proposition~\ref{topprop} 
which we shall present
also gives that 
$$C(\theta_{T,\kappa};T)
\subset \mathcal{T}_R(\tilde \gamma), \quad \text{if } \, \sin\tfrac12\theta_{T,\kappa}=\frac{\sinh \tfrac\kappa2 R}{\sinh \kappa T}, \quad \text{if } \, \, T>0.$$
Sending $\kappa \searrow 0$, despite being slightly weaker, one essentially recovers the familiar ``sine equals opposite over hypothesis'' in Euclidean geometry.

\medskip

The other main tool that we shall use, as in our earlier related works \cite{SZKN}, \cite{BS15}, is a scale oriented microlocalization about
the unit cosphere bundle $S^*\gamma\subset S^*M$ associated with our $\gamma\in \varPi$.  We may work in local coordinates so that $\gamma$ is just
$$\{(t,0,\dots,0): \, 0\le t\le 1\}.$$
Fix then $\alpha\in C^\infty_0(\R)$ satisfying $\alpha(s)=1$ for $|s|\le 1$ and $\alpha(s)=0$ for $|s|\ge 2$ and define compound symbols
\begin{multline}\label{PDO}
Q_{\theta,\lambda}(x,y,\xi)=\alpha\bigl(\theta^{-1}d_g(x,\gamma)\bigr) \, \alpha\bigl(\theta^{-1}d_g(y,\gamma)\bigr) \, \alpha\bigl(\theta^{-1}|\xi'|/|\xi|\bigr) \, \Upsilon(|\xi|/\la), 
\\ \xi'=(\xi_2,\dots,\xi_n),
\end{multline}
where $\Upsilon\in C^\infty(\R)$ is assumed to satisfy
\begin{equation}
\label{dyadic}
\Upsilon(s)=1, \, \, s\in [c_0,c_0^{-1}], \quad \Upsilon(s)=0, \, \, s\notin[\tfrac{c_0}2, 2c_0^{-1}],
\end{equation}
with $c_0>0$ a small but fixed number to be specified later.  We then define the associated integral operators $Q_{\theta,\la}$ with kernels
$$Q_{\theta,\la}(x,y)=(2\pi)^{-n}\int_{\Rn} e^{i\langle x-y,\xi\rangle} \, Q_{\theta,\la}(x,y,\xi)\, d\xi$$
expressed in our local coordinates about $\gamma$.

In what follows we shall take $\la^{-\frac12+\delta_0}\le \theta \ll 1$ for some $\delta_0>0$ to also be specified later.
Note that
\begin{multline}\label{K10}
|D^{\alpha_1}_{x,y}D^{\alpha_2}_\xi Q_{\theta,\la}(x,y,\xi)|\le C_{\alpha_1,\alpha_2}\theta^{-|\alpha_1|-|\alpha_2|} \la^{-|\alpha_2|}, \, \, \forall \, \alpha_1,\alpha_2,
\\
\text{and } \, \, |\partial^j_{\xi_1}Q_{\theta,\la}(x,y,\xi)|\le C_j\la^{-j}.
\end{multline}
From this and a simple integration by parts argument we deduce that we have the uniform bounds for such $\theta$ and $\la \gg 1$
$$|Q_{\theta,\la}(x,y)|\le C_N \theta^{n-1}\la^n\bigl(1+\la|x_1-y_1| +\theta\la |x'-y'|)^{-N}, \quad \forall \, N=1,2,\dots.$$
Consequently, we have the uniform bounds
\begin{multline}\label{kernel}
\sup_x \int |Q_{\theta,\la}(x,y)|\, dy, \quad \sup_y \int |Q_{\theta,\la}(x,y)| \, dx \le C,
\\
\text{and } \, \, |Q_{\theta,\la}(x,y)|\le C_N\theta^{n-1}\la^n\bigl(1+\theta\la d_g(x,y)\bigr)^{-N}, \quad N=1,2,3,\dots.
\end{multline}

We then shall use the following local result which is valid for {\em all compact Riemannian manifolds},
which also does not require the support assumptions in \eqref{K1''}.
It is
based on the escape times of balls of radius $(\theta\la)^{-1}$ exiting $\tube$ or $\gamma$ if
they are at  traveling at unit speed along geodesics of angle $\theta$ from $\gamma$.

\begin{proposition}[{\bf Escape times}]\label{tatprop}  Fix a compact Riemannian manifold $(M,g)$ of dimension
$n\ge2$, and let
 $a\in C^\infty_0((-1,1))$.  Then if the constant $c_0>0$ in \eqref{dyadic} is small enough
\begin{equation}\label{K11}
\Bigl\|\int a(t) e^{-it\la}  \bigl(I-Q_{\theta,\la}\bigr)  e^{itP}f\, dt\Bigr\|_{L^2(\tube)}
\le C_{\delta_0}\la^{-\frac14}\theta^{-\frac12}\|f\|_{L^2(M)},
\end{equation}
with $C_{\delta_0}$ independent of $\theta\ge\la^{-\frac12+\delta_0}$, if $0<\delta_0<1/2$ is fixed
and $\la$ is large.  Moreover, when $n=2$ and  $c_0>0$ is sufficiently small, we have, for any $\e>0$,
\begin{equation}\label{K11'}%\tag{1.11$'$}
\Bigl\|\int a(t) e^{-it\la}  \bigl(I-Q_{\theta,\la}\bigr)  e^{itP}f\, dt\Bigr\|_{L^2(\gamma)}
\le C_{\delta_0,\e}\theta^{-\frac12-\e}\|f\|_{L^2(M)},
\end{equation}
with $L^2(\gamma)$ denoting the norm taken with respect to arc length measure over
our $\gamma\in \varPi$.  For a given $n$, the constants in \eqref{K11} and \eqref{K11'} also only depend on the size of
finitely many derivatives of $a$.
\end{proposition}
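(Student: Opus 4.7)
\emph{Proof plan for Proposition~\ref{tatprop}.}  I would pass to a $TT^*$ bound and then read off the target $\|SS^*\|_{L^2\to L^2}\lesssim\la^{-\frac12}\theta^{-1}$ from the escape--time heuristic: a wave packet of frequency $\sim\la$ traveling at angle $\ge\theta$ to $\gamma$ remains inside $\tube$ for time at most $O(\la^{-\frac12}/\theta)$.  Telescoping the product defining the compound symbol,
$$1-Q_{\theta,\la}(x,y,\xi)=(1-A)+A(1-B)+AB(1-C)+ABC(1-D),$$
with $A=\alpha(\theta^{-1}d_g(x,\gamma))$, $B=\alpha(\theta^{-1}d_g(y,\gamma))$, $C=\alpha(\theta^{-1}|\xi'|/|\xi|)$ and $D=\Upsilon(|\xi|/\la)$, two of the four pieces are disposed of at once.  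The $(1-A)$ contribution is supported outside the $\theta$-neighborhood of $\gamma$ and hence vanishes on $\tube$ since $\theta\ge\la^{-\frac12+\delta_0}\gg\la^{-\frac12}$.  In the $ABC(1-D)$ contribution the $t$-phase $|\xi|-\la$ has magnitude $\gtrsim\la$ on the support of $1-D$, so repeated integration by parts in $t$ against the smooth cutoff $a(t)$ produces kernels of size $O_N(\la^{-N})$, negligible compared to the claimed bound.

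For the two surviving pieces (bad input position $y$ and bad angular direction) set
$$Sf(x)=\chi_{\tube}(x)\int a(t)\,e^{-it\la}(I-Q_{\theta,\la})\,e^{itP}f\,dt,$$
and compute
$$SS^*=\chi_{\tube}\int h(r)\,e^{-ir\la}(I-Q_{\theta,\la})\,e^{irP}\,(I-Q_{\theta,\la})^*\,dr\,\chi_{\tube},$$
where $h(r)=\int a(s+r)\overline{a(s)}\,ds$ is smooth and supported in $|r|\le 2$.  It suffices to prove $\|SS^*\|_{L^2\to L^2}\lesssim\la^{-\frac12}\theta^{-1}$.  I would then substitute a short-time Hadamard / Fourier-integral parametrix for $e^{irP}$ on $|r|\le 2$, with phase $\phi(r,z,\eta)$ satisfying $\partial_r\phi=|\nabla_z\phi|_g$ and $\phi(0,z,\eta)=z\cdot\eta$, plus a smoothing remainder.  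Composing with $(I-Q_{\theta,\la})$ on the left and right preserves, by the kernel bounds \eqref{K10}, the $\theta$-spatial, $\theta$-angular, and $\Upsilon$-scale cutoffs up to compound-symbol errors of size $O((\theta\la)^{-1})$.  Stationary phase in $r$ (justified by the scale cutoff, which makes $-r\la+\phi(r,z,\eta)$ stationary exactly where $|\nabla_z\phi|_g=\la$) then localizes the kernel of $SS^*$ to pairs $(x,x')\in\tube\times\tube$ joined by a geodesic of length $|r|\le 2$ making angle $\ge\theta$ with $\gamma$ somewhere along its run.  Such a geodesic intersects $\tube$ in an $r$-interval of length $O(\la^{-\frac12}/\theta)$ by a direct comparison with the Euclidean picture at the injectivity scale; combining this escape-time window with the standard pointwise dispersive bound on the FIO amplitude and a Schur test on the resulting kernel yields the bound $\|SS^*\|_{L^2\to L^2}\lesssim \la^{-\frac12}\theta^{-1}$, as desired.

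The restriction estimate \eqref{K11'} in $n=2$ follows the same outline with $\chi_\gamma$ in place of $\chi_{\tube}$; the reduction to a one-dimensional curve creates a borderline stationary phase in the transverse frequency variable which forces the $\theta^{-\varepsilon}$ loss but no $\la^{-\frac14}$ gain.  The main obstacle throughout is the quantitative bookkeeping in the composition $Q_{\theta,\la}\circ e^{irP}\circ Q_{\theta,\la}^*$: because $Q_{\theta,\la}$ is a compound-symbol operator at scale $\theta$, the non-principal terms from symbol calculus carry $\theta^{-1}$-derivatives in position and $\la^{-1}$-derivatives in frequency by \eqref{K10}, and the constraint $\theta\ge\la^{-\frac12+\delta_0}$ is exactly what ensures every such error carries a genuine $\la^{-\delta_0}$ factor, which can then be absorbed into the target bound after summing finitely many composition layers.
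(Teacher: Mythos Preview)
Your $TT^*$ plus escape-time framing is correct and matches the paper, and your telescoping disposal of the $(1-A)$ and $ABC(1-D)$ pieces is a legitimate alternative to the paper's insertion of a dyadic cutoff $\beta(P/\la)$.  The genuine gap is the step you describe as ``stationary phase in $r$ \dots\ dispersive bound \dots\ Schur test'': if you integrate out $r$ first and then Schur-test the kernel of $SS^*$ on $\tube\times\tube$, you do not recover $\la^{-1/2}\theta^{-1}$.  That kernel is essentially the cluster kernel $\widehat h(P-\la)$ composed with the angular cutoffs, of size $\la^{(n-1)/2}d_g(x,x')^{-(n-1)/2}$ on $\{d_g(x,x')\lesssim\la^{-1/2}\theta^{-1}\}$ and negligible outside; in $n=2$ the Schur integral over the tube gives only
\[
\sup_{x\in\tube}\int_{\tube}|K(x,x')|\,dx'\ \approx\ \la^{1/2}\int_0^{\la^{-1/2}\theta^{-1}} s^{-1/2}\,\la^{-1/2}\,ds\ \approx\ \la^{-1/4}\theta^{-1/2},
\]
strictly larger than $\la^{-1/2}\theta^{-1}$ since $\theta\ge\la^{-1/2+\delta_0}$, and the loss is worse in higher dimensions.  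A pointwise kernel bound discards the unitarity of $e^{irP}$, which is precisely what closes the estimate.  (Your description also conflates the $r$-window with the spatial support of $K$: once $r$ is integrated out, the ``escape-time window'' becomes the support condition $d_g(x,x')\lesssim\la^{-1/2}\theta^{-1}$, and that alone is not enough.)

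The paper instead proves (its Lemma~5.1) that for each fixed $t$ with $C_0\la^{-1/2}\theta^{-1}\le|t|\le1$ and $x,y\in\tube$, the kernel of $(I-Q_{\theta,\la})\beta(P/\la)e^{itP}(I-Q_{\theta,\la})^*$ is $O_N(\la^{-N})$; the mechanism is integration by parts in the \emph{transverse frequency} $\xi'$, not stationary phase in time.  In geodesic normal coordinates centered on $\gamma$ the kernel is $\int e^{-iy\cdot\xi+it|\xi|}a_{\theta,\la}\,d\xi$ with $a_{\theta,\la}=0$ for $|\xi'|/|\xi|\le B\theta$, and on its support $\bigl|\nabla_{\xi'}(-y\cdot\xi+t|\xi|)\bigr|=\bigl|t\,\xi'/|\xi|-y'\bigr|\ge\la^{-1/2}$ once $|t|\ge C_0\la^{-1/2}\theta^{-1}$ and $|y'|\le\la^{-1/2}$; each integration by parts gains $\la^{1/2}\theta\ge\la^{\delta_0}$.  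One then simply integrates the trivial bound $\|(I-Q)\beta(P/\la)e^{itP}(I-Q)^*\|_{L^2\to L^2}\le C$ over the window $|t|\le C_0\la^{-1/2}\theta^{-1}$ to obtain $\la^{-1/2}\theta^{-1}$ directly---no Schur, no dispersive estimate.  For \eqref{K11'} the paper \emph{does} eventually pass to a pointwise kernel bound plus Young, but only after a second, finer escape-time lemma at scale $\la^{-1}\theta^{-2-\e}$ together with the explicit bound $|K(\gamma(s),\gamma(s'))|\lesssim\la^{1/2}|s-s'|^{-1/2}$ on the short-separation region; your one-line sketch does not capture this two-scale structure.
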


The two-dimensional estimate \eqref{K11'} is related to 
estimates of Greenleaf and Seeger~\cite{GS} for Fourier integral operators associated to one-sided folding canonical
relations and
trace estimates for the wave equation of Tataru~\cite{Tat}.

\newsection{The main argument}

We shall postpone the proof of the two propositions until after this section.  Now let us show how they imply our Kakeya-Nikodym estimates.

\medskip

Let us split up our operators $\chi_\la$ into two pieces.  The first,
\begin{equation}\label{K12}
\chi^\theta_\la = Q_{\theta,\la} \circ \chi(T(\la-P)) =\frac1{2\pi T}\int_{-T}^T
\widehat \chi(t/T) e^{-i\la t} \, Q_{\theta,\la} \circ e^{itP} \, dt,
\end{equation}
denotes the microlocalization near the geodesic, which should be thought of as the
``main'' term, while the ``remainder'', $R^\theta_\la =\chi_\la -\chi_\la^\theta$, is given
by
\begin{equation}\label{K13}
R^\theta_\lambda = (I-Q_{\theta,\la})\circ \chi(T(\la-P))=
\frac1{2\pi T} \int_{-T}^T \widehat \chi(t/T) e^{-i\la t} \, \bigl(I-Q_{\theta,\la}\bigr)\circ
e^{itP} \, dt.
\end{equation}
%As we shall see, we can obtain favorable bounds for this remainder operator on {\em any} manifold.

Using \eqref{K11} it is very easy to handle the remainder 
on {\em any} manifold.  No curvature assumptions are needed.
%even if we
%make no curvature assumptions on $(M,g)$.  
%We note that 
%$$\chi_\lambda-\chi_\lambda \circ \beta(P/\la)$$
%has a kernel all of whose derivatives are $O(\la^{-N})$ for any $N$.  Consequently,
%$$R^\theta_\lambda=\frac1{2\pi T} \int_{-T}^T \widehat \chi(t/T) e^{-i\la t} \, \bigl(I-Q_{\theta,\la}\bigr)\beta(P/\la)
%e^{itP} \, dt \quad \text{mod }\la^{-N},$$
%where $\text{mod }\la^{-N}$ means modulo  operators whose kernels have sup--norms which are rapidly decaying in
%$\lambda$.  
%By \eqref{K11}, if $[j,j+1]\cap [-T,T]\ne \emptyset$, we have
%\begin{multline*}
%\int_j^{j+1}\int_\tube \Bigl| \Bigl( (I-Q_{\theta,\la})\beta(P/\la)e^{itP}f\Bigr)(x))\Bigr|^2 \, dV dt
%\\
%\le C\la^{-\frac12}\theta^{-\sigma_n}\|e^{ijP}f\|_{L^2(M)}^2=C\theta^{-\sigma_n}\|f\|_{L^2(M)}^2.
%\end{multline*}
%On account of the factor $T^{-1}$ in \eqref{K13}, if we use Minkowski's
%inequality and sum over the $\approx T$ such
%intervals intersecting $[-T,T]$ as $j$ ranges over ${\mathbb Z}$, we deduce that 
Choose $a\in C^\infty_0(\R)$ so that $\sum_{-\infty}^\infty a(t-k)\equiv 1$.  Then since
$e^{ikP}$ maps $L^2(M)$ to itself with norm 1, \eqref{K11} implies that we have the 
uniform bounds
$$\frac1{2\pi T} \Bigl\|\int a(t-k)\widehat \chi(t/T) e^{-i\la t}(I-Q_{\theta,\la})e^{itP}f \, dt
\Bigr\|_{L^2(M)}\le CT^{-1}\la^{-\frac14}\theta^{-\frac12}\|f\|_{L^2(M)}.$$
Since, in view of \eqref{K3'} the left side is zero if $|k|\ge 2T$, if we sum over these bounds
we deduce that
\begin{equation}\label{K14}
\|R^\theta_\lambda f\|_{L^2(\tube)}\le C\la^{-\frac14}\theta^{-\frac12}\|f\|_{L^2(M)}.\end{equation}

We shall always assume that
\begin{equation}\label{K15}
\theta \ge \la^{-\delta}, \quad \text{for some } \, 0<\delta \ll 1/2,
\end{equation}
which we can achieve by taking the $\delta_0$ in the Proposition~\ref{tatprop} to be close
to $1/2$.
%Consequently, $Q_{\theta,\la}$ also provides a $\approx\theta$ localization about any geodesic,
%$\gamma_0$,
%which is $\lambda^{-1/2}$ from to $\gamma$ as measured in $S^*M$.  Hence there is a
%constant $C$ so that the analog of \eqref{K14} is valid when $\gamma$ is replaced by
%any such $\gamma_0$.  Since we can write $\tube$ as a $\lambda^{-\frac12}$ union
%over such geodesics, we deduce that we have
Consequently, if we assume that $\delta$ in \eqref{K15} is small enough
we get
\begin{equation}\label{K16}
\|R^\theta_\lambda f\|_{L^2(\tube)} \le C\la^{-\frac18}\|f\|_{L^2(M)},
\end{equation}
%\begin{equation}\label{K14'}\tag{1.10$'$}
%\|R^\theta_\lambda f\|_{L^2(\tube)} \le C\la^{-\frac14}\theta^{-\sigma_n/2}\|f\|_{L^2(M)},
%\end{equation}
which is much better than the bounds posited in \eqref{K1''}.
We note here that, in addition to making no curvature assumptions, we are not requiring
the support assumptions in \eqref{K1''}.  

On account of \eqref{K16}, we would have \eqref{K1''} if we could show that
\begin{equation}\label{K17}
\|\chi^\theta_\la \|_{L^2(\tube) \to L^2(\tube)}\lesssim c(\la), \quad \text{if } \, T=c\log\lambda,
\end{equation}
where $c>0$ will be a small constant, chosen, for instance, so that the $\delta>0$ in \eqref{K15} is small.
Also, the first part of \eqref{K17} means that we assume that the operators satisfy the bounds
when, as in \eqref{K1''}, we assume that the functions are supported in $\tube$.
Since $\chi(T(\la+P))$ has a smooth kernel with $O(\la^{-N})$ bounds on all derivatives independent
of $T\ge 1$, to prove \eqref{K17}, by Euler's formula, it suffices to show that
\begin{equation}\label{K17'} \tag{3.6$'$}
\| \tilde\chi^\theta_\la \|_{L^2(\tube) \to L^2(\tube)}\lesssim c(\la), \quad \text{if } \, T=c\log\lambda,
\end{equation}
where
\begin{equation}\label{K12'}\tag{3.1$'$}
\widetilde \chi^\theta_\la = \frac1{\pi T}\int
\widehat \chi(t/T) e^{-i\la t} \, Q_{\theta,\la} \circ \cos t\sqrt{-\Delta_g} \, dt,
\end{equation}

We have switched from $\exp(it\sqrt{-\Delta_g})$ to $\cos t\sqrt{-\Delta_g}$ so that we can
use the Hadamard parametrix  and the Cartan-Hadamard theorem to lift the calculations
that will be needed for \eqref{K17'} up to the universal cover $(\Rn,\tilde g)$ of $(M,g)$. 
This is the approach that was used in \cite{BS} and \cite{SZKN}.

 Let $\{\alpha\}=\Gamma$
denotes the group of deck transformations preserving the associated covering map $\kappa: \Rn \to M$ coming
from the exponential map from $\gamma(0)$ associated with the metric $g$ on $M$.  The metric $\tilde g$
then is its pullback via $\kappa$.  Choose also a Dirichlet fundamental domain, $D\simeq M$, for $M$ centered
at the lift $\tilde \gamma(0)$ of $\gamma(0)$.  We shall let $\tilde \gamma(t)$, $t\in \R$, denote the lift of the
geodesic $\gamma(t)$, $t\in \R$, containing the unit segment $\gamma(t)$, $0\le t\le 1$ around which our tube
is centered.  We shall work in  geodesic normal coordinates vanishing at $\tilde \gamma(0)$ and we may assume that
$$\tilde \gamma(t)=\{(t,0,\dots, 0): \, t\in \R\}.$$

Let $\Gamma_{{\mathcal T}_R(\tilde \gamma)}\subset\Gamma$ be all those deck transformations for which
$$\alpha(D)\cap {{\mathcal T}_R(\tilde \gamma)} \ne \emptyset, \, \, \, \text{where } \, \,
\, \, R=100 \cdot \text{diam }D.$$
Here ${\mathcal T}_R(\tilde \gamma)$ denotes the $R$-tube about the geodesic in $(\Rn,\tilde g)$, and, 
since $D$ is a Dirichlet domain
$$R\approx \text{Inj }M.$$
We measure distances in $(\Rn,\tilde g)$ using its Riemannian distance function $d_{\tilde g}(\cd, \cd)$.  The distance
function on $(M,g)$ is similarly denoted by $d_{g}(\cd, \cd)$.

Following \cite{SZKN}, we recall also that if $\tilde x$ denotes the lift of $x\in M$ to $D$, then we have the following formula
$$\bigl(\cos t\sqrt{-\Delta_g}\bigr)(x,y)=\sum_{\alpha\in \Gamma}
\bigl(\cos t\sqrt{-\Delta_{\tilde g}}\bigr)(\tilde x,\alpha(\tilde y)).$$
Consequently,
\begin{equation}\label{K12''}\tag{3.1$''$}
\widetilde \chi^\theta_\la =\sum_{\alpha\in \Gamma} U_\alpha^{\theta,\la},
\end{equation}
where $U_\alpha^{\theta,\la}$ is the operator with kernel $U^{\theta,\la}(\tilde x,\alpha(\tilde y))$, where
\begin{multline}\label{K18}
U^{\theta,\la}(\tilde x,\tilde y)=\frac1{\pi T}\int \widehat \chi(t/T) \, e^{i\la t} \, \bigl(Q_{\theta,\la} \circ \cos(t\sqrt{-\Delta_{\tilde g}})\bigr)(\tilde x, \tilde y) \, dt\\
=\left(Q_{\theta,\la}\circ \Bigl[  \, \frac1{\pi T}\int \widehat \chi(t/T) \, e^{i\la t} \, \bigl( \cos(t\sqrt{-\Delta_{\tilde g}})\bigr) \, dt \, \Bigr] \right)(\tilde x, \tilde y).
\end{multline}

If we let $K(\tilde x,\tilde y)$ denote the kernel of the operator in the square brackets, i.e.,
$$K(\tilde x,\tilde y)=\frac1{\pi T}\int \widehat \chi(t/T) \, e^{i\la t} \, \bigl( \cos(t\sqrt{-\Delta_{\tilde g}})\bigr)(\tilde x,\tilde y) \, dt,$$
%then using arguments from  \cite{BS}, \cite{CS} and \cite{SZ} we can show that we have the uniform bounds
then, as we shall show, one can use the Hadamard parametrix to obtain the uniform bounds
\begin{equation}\label{K19}
|K(\tilde x,\tilde y)|\le CT^{-1}\la^{\frac{n-1}2} \bigl(d_{\tilde g}(\tilde x,\tilde y)\bigr)^{-\frac{n-1}2}, \quad \text{if } \, \, d_{\tilde g}(\tilde x,\tilde y)\ge1,
\end{equation}
provided that, as we are assuming, $T=c\log\la$ with $c>0$ sufficiently small.
The estimates also hold when $d_{\tilde g}(\tilde x,\tilde y)\le 1$, but we shall need this for now.

We claim that these size estimates along with Young's inequality and \eqref{kernel} imply that we have the uniform bounds
%As was shown in \cite{BS}, \cite{CS} and \cite{SZ} we have the uniform bounds
\begin{equation}\label{K19'}\tag{3.8$'$}
\|U_\alpha^{\theta,\la}\|_{L^2(\tube)\to L^2(\tube)} \le CT^{-1} \bigl(1+d_{\tilde g}(0,\alpha(0))\bigr)^{-\frac{n-1}2},
\end{equation}
when $\alpha\ne Identity$, and a slightly different argument will be needed to show that the bounds
also hold for $\alpha=Identity$.  We shall postpone the simple proof of \eqref{K19} until after we introduce the
Hadamard parametrix.

Recall that, by construction, the kernel $Q_{\theta,\la}(x,y)$ vanishes if the distance from either $x$ or $y$ to
$\{\gamma(t): \, 0\le t\le 1\}$ is  larger than $2\theta$.
Therefore, \eqref{K15} and  \eqref{kernel}, for $x,y\in \tube$, we have
\begin{align*}
|U^{\theta,\la}(\tilde x,\alpha(\tilde y))|&\le C\sup_{x \in {\mathcal T}_{2\theta}(\gamma), \, y\in \tube}|K(\tilde x,\alpha(\tilde y))|
\\
&\le CT^{-1}\la^{\frac{n-1}2}\bigl(d_{\tilde g}(0,\alpha(0))\bigr)^{-\frac{n-1}2},
\end{align*}
%if $\dtube$ denotes the double of $\tube$, i.e., the tube of radius $2\la^{-\frac12}\gg (\la\theta)^{-1}$, 
due to our
assumption \eqref{K15}.  This and the $L^1$ estimates for the kernel of $Q_{\theta,\la}$ account for the first
inequality here, and the second follows from the fact that $d_{\tilde g}(\tilde x,\alpha(\tilde y))\approx d_{\tilde g}(0,\alpha(0))$, 
if $x,y\in {\mathcal T}_{2\theta}(\gamma)$, 
due
to our assumptions about $\text{Inj }M$.  By using the above estimate and Young's inequality, one obtains \eqref{K19}.

To show it is valid for $\alpha=Identity$, choose $\eta\in C^\infty_0(\R)$ satisfying $\eta(s)=1$, $|s|\le 2$ and $\eta(s)=0$, $|s|\ge 3$.  Then
 we can write
\begin{equation*}
K(\tilde x,\tilde y)=\frac1{\pi T}\int \eta(t) \widehat \chi(t/T) e^{i\la t}\bigl(\cos t\sqrt{-\Delta_g}\bigr)(\tilde x,\tilde y) \, dt
+R(\tilde x,\tilde y),\end{equation*}
where
\begin{equation*}
R(\tilde x, \tilde y)=\frac1{\pi T} \int \bigl(1-\eta(t)\bigr) \widehat\chi(t/T) e^{i\la t} \bigl(\cos t\sqrt{-\Delta_{\tilde g}}\bigr)( \tilde x, \tilde y) \, dt,
\end{equation*}
since $(\cos t\sqrt{-\Delta_{\tilde g}})(\tilde x,\tilde y)=(\cos t\sqrt{-\Delta_{g}})(x,y)$ if $d_g(x,y)\le 3$, due to Huygens principle and our assumptions about
$\text{Inj }M$.  The operator
with kernel
equal to the first term  in the formula for $K(\tilde x,\tilde y)$ is obviously bounded from $L^2(M)$ to itself with norm $O(T^{-1})$ since $\cos t\sqrt{-\Delta_g}$ has
norm one.  Later we shall show, also using the Hadamard parametrix, that
\begin{equation}\label{K19''}\tag{3.8$''$}
|R(\tilde x,\tilde y)|\le CT^{-1}, \quad \text{if } \, \, d_g( x, y) \le 2,
\end{equation}
 provided, as we are assuming, $T=c\log\la$ with $c>0$ sufficiently small.  Using these facts
  and the fact that $\|Q_{\theta,\la}\|_{L^2(M) \to L^2(M)}\le C$ (by \eqref{kernel}), one
  deduces that \eqref{K19'} also must hold for $\alpha=Identity$, which completes its proof
  (apart from showing that \eqref{K19} and \eqref{K19'} are valid as we shall do later).

Since there are only $O(1)$, ``translates,'' $\alpha(D)$, of $D$ that intersect any geodesic ball 
with arbitrary center
of radius $R$ (which was fixed earlier),\footnote{One sees this assertion 
by noting that, since the $\alpha$ are isometric, the volume, $v$, of $D$ agrees with that
of any $\alpha(D)$, $\alpha\in \Gamma$.  Similarly, if $d$ denotes the diameter of our Dirichlet domain $D$, then $\alpha(D)$ has the same diameter.  Let $B_{\tilde g}(P;R)$
denote the geodesic ball of radius $R$ about some point $P$.  Then if $\alpha(D)\cap B_{\tilde g}(P;R) \ne \emptyset$,
it follows from the triangle inequality that $\alpha(D)\subset B_{\tilde g}(P;R+d)$.  As the $\alpha(D)$ are disjoint,
the number of such domains intersecting the ball is therefore bounded from above by $\text{Vol}_{\tilde g}\bigl(B_{\tilde g}(P;R+d)\bigr)/v$.
Since volume comparison theorems (see, e.g. \cite{Chavel}) and our curvature assumptions imply that $\text{Vol}_{\tilde g}\bigl(B_{\tilde g}(P;R+d)\bigr)$ is bounded
by the the volume of balls with  this radius in hyperbolic space, ${\mathbb H}^n$, the assertion follows.}
 it follows that
$$\#\{\alpha \in \Gamma_{{\mathcal T}_R(\tilde \gamma)}: \, d_{\tilde g}(0,\alpha(0))\in [2^k,2^{k+1}]\} \le C 2^k.
$$
This is because one can cover the set $\{x \in \Gamma_{{\mathcal T}_R(\tilde \gamma)}: \, d_{\tilde{g}}(0,x) \in [2^k,2^{k+1}]\}$ with $O(2^k)$ balls of radius $R$.
Thus, we deduce from \eqref{K3'} and \eqref{K19'} that
\begin{equation*}
\sum_{\alpha\in \Gamma_{{\mathcal T}_R(\tilde \gamma)}} \|U_\alpha^{\theta,\la}\|_{L^2(\tube)\to L^2(\tube)}
\lesssim 
T^{-1} %\sum_{1\le j\le T} j^{-\frac{n-1}2} 
\sum_{1 \leq 2^k \leq T} 2^k 2^{-k\frac{n-1}2 }
\lesssim c(\la),
\quad
\text{if } \, \, T=c\log\la,
\end{equation*}
where the constants $c(\la)$ are as in \eqref{K1}.  Recall that $\bigl(\cos t\sqrt{-\Delta_{\tilde g}}\bigr)(x,y)=0$ if
$d_{\tilde g}(x,y)>t$. Here, as in  the rest of this section, we are dropping the tildes from our various points $x$, $y$, etc., since all the
calculations will be done in $(\Rn,\tilde g)$.

Since there are $O(\exp(c_0T))$ nonzero $U_\alpha^{\theta,\la}$ for some fixed $c_0$, we deduce that we would finish
matters and obtain \eqref{K17'} if we could choose the constant $c$ in definition of $T$ so that
for large enough $\la$ we have.
\begin{equation}\label{K20}
\|U_\alpha^{\theta,\la}\|_{L^2(\tube)\to L^2(\tube)} \le \la^{-1} \quad \text{if } \, \, \alpha \notin \Gamma_{{\mathcal T}_R(\tilde \gamma)}.
\end{equation}

We shall need to finally use Toponogov's theorem to do this.   We note that, by Proposition~\ref{topprop}, we have
$$\bigl\{r\tfrac\xi{|\xi|}: \, \, \bigl|\tfrac\xi{|\xi|} - (1,0,\dots,0)\bigl| \le c_R\theta_T, \, \, |r|\le T \bigr\}\subset
{{\mathcal T}_R(\tilde \gamma)}, \, \, \, \text{if } \,  \, \theta_T=e^{-T}.$$
%(See Figure~\ref{figure}.)  
Recall that the  $x$-gradient
of the Riemannian distance function, $d_{\tilde g}(x,y)$ points in the direction of the tangent vector at $x$  of
the geodesic connecting $x$ and $y$.  So the last assertion just means that 
$$\min_{\pm}\Bigl| \frac{\nabla_x d_{\tilde g}(x,y)}{|\nabla_x d_{\tilde g}(x,y)|} \pm \1\Bigr|\ge c_R\theta_T,
\quad \text{if } \, \, x=0\, \, \text{and } \, \, y\notin {{\mathcal T}_R(\tilde \gamma)},$$
with $\1$ denoting the vector $(1,0,\dots,0)$.  Repeating ourselves, this is just because the geodesic
cones of aperture $\approx \theta_T$ (measured by the metric $\tilde g$)
with vertex $0$ and central directions $\pm \1=\pm\tfrac{d}{dt}\tilde \gamma(t)$, $t=0$, are contained in the intersection of the ball of radius $T$ centered at
our origin and ${{\mathcal T}_R(\tilde \gamma)}$.  The same will remain true at any $x$ point on our
unit length geodesic $\{\tilde \gamma(t):0\le t\le 1\}$ since we would just be replacing $T$ by a radius
in $[T-1,T+1]$.  Hence, if we choose $c_\delta=c>0$ in the definition of $T=c\log \lambda$ to be
small enough so that $c_R\theta_T=\lambda^{-\delta/2}$, we have the crucial fact that
\begin{equation}\label{K21}\min_{\pm}\Bigl| \frac{\nabla_x d_{\tilde g}(x,y)}{|\nabla_x d_{\tilde g}(x,y)|} \pm \1\Bigr|\ge \la^{-\delta/2},
\quad \text{if } \, \, x=\gamma(t), \, \, 0\le t\le 1, \, \, \text{and } \, \, y\notin {{\mathcal T}_R(\tilde \gamma)}.
\end{equation}
We can take $\delta$ to be any fixed small positive number by adjusting this constant $c$.  In particular,
we shall want it to be small enough so that both \eqref{K15} and \eqref{K16} are both valid.  We should also point out
that every time we reduce the size of $c$ this has the effect of 
%reducing the size of $\delta$ 
improving the Toponogov lower bound \eqref{K6}
and so the
previous steps of the proof will not be invalidated when we make future reductions.

%\begin{figure}
%\centering
%\includegraphics[height=9cm]{TopFig.pdf}
%%\caption{$\Upsilon_T$}
%\caption{}\label{figure}
%\end{figure}

 The pseudo-differential
cutoff $Q_{\theta,\la}$ occurring in the definition \eqref{K18}  of $U_\alpha^{\theta,\la}$ involves microlocalizing at an angle 
 $\theta=\theta_\la$ of size $\la^{-\delta}$ about our geodesic, which is much smaller than the one occurring in \eqref{K21}
 if $\la$ is large.  This will
allow us to show that for $\la \gg 1$ we have
\begin{equation}\tag{3.9$'$}\label{K20'}
|U^{\theta,\la}( x, y)|\le \la^{-1} \quad \text{if } \, \, 
d_{\tilde g}\bigl( x, \, \{\tilde \gamma(t): \, 0\le t\le 1\}\bigr)\le \la^{-\frac12}, \, \, \text{and } \, \, 
%\\
 y\notin {{\mathcal T}_R(\tilde \gamma)},
%\text{and } \, \, \tilde y\notin \bigl\{r\tfrac\xi{|\xi|}: \, \, \bigl|\tfrac\xi{|\xi|}- (1,0,\dots,0)) \bigr| \le \la^{-\delta/2}, \, \,  -\infty<r<\infty\bigr\},
\end{equation}
which is more than adequate for obtaining \eqref{K20}.  As noted before, $U_\alpha^{\theta,\la}( x, y)$ vanishes identically
when $d_{\tilde g}( x, y)>T$.

%We shall want to choose $\delta>0$ small enough so that \eqref{K16} is valid.  Then, 
Note that, 
because of the coordinates
we are using,
by \eqref{PDO}, the compound
symbol of our pseudo-differential cutoff $Q_{\theta,\la}$ 
satisfies
\begin{multline}\label{K22}
Q_{\theta,\la}(x,y,\xi)=0\quad \text{if } \, \, 
d_{\tilde g}(x,\{(t,0,\dots,0): 0\le t\le 1\})\ge C\la^{-\delta}, %\, \, 
\\
d_{\tilde g}(y,\{(t,0,\dots,0): 0\le t\le 1\})\ge C\la^{-\delta}, 
\, \, \, \text{or } \, \, \min_{\pm}\bigl| \pm\tfrac\xi{|\xi|}-\1\bigr|\ge C\lambda^{-\delta},
\end{multline}
for some constant $C$ where $\1=(1,0,\dots,0)$
denotes the direction of our unit speed geodesic $\tilde \gamma(t)=(t,0,\dots,0)$.  Similarly, by \eqref{K10}, since we are taking $\theta=\lambda^{-\delta}$, the symbol also satisfies
satisfy the size estimates
\begin{equation} %\tag{2.8$'$}
\label{size}
|D^{\alpha_1}_{x,y}D^{\alpha_2}_\xi Q_{\theta,\la}(x,y,\xi)|\le C_{\alpha_1,\alpha_2}
\lambda^{\delta(|\alpha_1|+|\alpha_2|)} (1+|\xi|)^{-|\alpha_2|}.
\end{equation}

At this point we shall follow the argument in B\'erard~\cite{Berard} and use the fact that
since $(\Rn,\tilde g)$ has nonpositive curvature as well as curvature pinched below by $-1$ we can
use the Hadamard parametrix to write the kernel of $\cos t
\sqrt{\lapptq}$ as a Fourier integral whose symbol and phase have derivatives growing at most exponentially
in terms of the geodesic distance from the origin.  There also will be a remainder with this property, but
this will trivial to deal with.

The phase functions will just involve the Riemannian distance functions
$d_{\tilde g}(x,y)=r$.  In addition to using \eqref{K21}, we shall need to use the fact
that when derivatives are taken with respect to our coordinate system we 
have for every multi-index $\alpha$
\begin{equation}\label{K23}
D^\alpha_x d_{\tilde g}(x,y)\le C_\alpha \exp(c_\alpha r),
\end{equation}
for some constants $C_\alpha$ and $c_\alpha$. This follows from Proposition 3 and Lemma 4 on p. 274 in B\'erard~\cite{Berard}.
Note that we get from this and \eqref{K21} that, after possibly reducing the size of the constant $c$
in the definition of $T$, we may assume in addition to \eqref{K21} that for large enough $\lambda$ we have
\begin{multline}\label{K21'}\tag{3.10$'$}
\min_{\pm}\Bigl| \frac{\nabla_w d_{\tilde g}(w,y)}{|\nabla_w d_{\tilde g}(w,y)|} \pm \1\Bigr|\ge \la^{-\delta/2},
\\ \text{if } \, \, 
d_{\tilde g}\bigl(w,\{\tilde \gamma(t), \, 0\le t\le 1\}\bigr) =O(\la^{-\frac12}),  \, \, \text{and } \, \, y\notin {{\mathcal T}_R(\tilde \gamma)},
\end{multline}
since we are assuming that $\delta$ is smaller than $1/2$.

Next, following B\'erard\footnote{We are using the phase functions $\theta(d_{\tilde g} \pm t)$ for our Fourier integrals for the sake of convenience,
instead of $\theta(d_{\tilde g}^2-t^2)$ as in \cite{Berard}, but our formulation follows from B\'erard's and an obvious change of variables.}~\cite{Berard}, we
can use the Hadamard parametrix to write for, say, $|t|$ or $d_{\tilde g}(x,y)$  larger than $1$,
%our $R$ in \eqref{K20}, 
\begin{equation}\label{K24}
\bigl(\cos t\sqrt{\lat}\bigr)(x,y)=\sum_{\pm}\int_{\R} e^{i\tau(d_{\tilde g}(x,y)\pm t)} \, a_\pm(x,y,\tau) \, d\tau
+R(t,x,y),
\end{equation}
where, if $|t|<T$ as well and if $N_0\in {\mathbb N}$ is fixed
\begin{multline}\label{K25}
|\bigl(\tfrac\partial{\partial \tau}\bigr)^ja_\pm(x,y,\tau)|\le C_j\bigl(1+d_{\tilde g}(x,y)\bigr)^{-\frac{n-1}2} \, 
(1+|\tau|)^{\frac{n-1}2-j}, \quad \text{and } 
\\
\bigl| D^{\alpha_1}_x\bigl(\tfrac\partial{\partial \tau}\bigr)^j a_\pm(x,y,\tau)\bigr| \le C_{j,\alpha_1}\exp(c_{N_0}T) \,
(1+|\tau|)^{\frac{n-1}2-j}, \, \, \, 0<|\alpha_1|\le N_0,
\end{multline}
and we may assume that the remainder satisfies %for a given fixed $N_0$
%\begin{equation}\label{K26}
%\bigl|D^\alpha_x \bigl(\tfrac\partial{\partial t}\bigr)^j R(t,x,y)\bigr| \le C_{N_0}\exp(c_{N_0}T), \quad
%\text{if } \, \, |\alpha| + j\le N_0.
%\end{equation}
\begin{equation}\label{K26}
\bigl| \bigl(\tfrac\partial{\partial t}\bigr)^j R(t,x,y)\bigr| \le C_{N_0}\exp(c_{N_0}T), \quad
\text{if } \, \,  j\le N_0.
\end{equation}
The various constants here are independent of $T$, but the exponential rate of growth in this parameter, may depend
on the number of space-time derivatives taken.  These bounds were established in \cite{Berard}.

Fix now $\vp\in C^\infty(\R)$ satisfying $\vp=1$ on $[-R/2,R/2]$ and $\text{supp }\vp\subset [-2R,2R]$.  Then, by
Huygens' principle and \eqref{K18}
\begin{multline}\label{K18'}\tag{3.7$'$}
U^{\theta,\la}(x,y)=\frac1{\pi T}\int (1-\vp(t)) \widehat \chi(t/T) \, e^{i\la t} \,  \, \bigl(Q_{\theta,\la} \circ \cos(t\sqrt{-\Delta_{\tilde g}})\bigr)(x, y) \, dt, 
\\ %\quad 
\text{if }\, x\in \tube, \, \, \text{and } \, \,  y\notin {\mathcal T}_R(\tilde \gamma).
\end{multline}

%If we let $R_t$ denote the integral operator with kernel $R(t,x,y)$, then, if we take $N_0$ large enough
%in \eqref{K26}, we can achieve
%\begin{equation}\label{K26'}\tag{3.16$'$}
%\bigl|\partial_t^j \bigl(Q_{\theta,\la} \circ R_t\bigr)(x,y)\bigr|\le C\exp(cT), \quad j=0,1,2,
%\end{equation}
%since $Q_{\theta,\la}$ is a pseudo-differential operator with kernel supported near our unit geodesic
%$\{\tilde \gamma(t): \, 0\le t\le1\}$.  The integrations in the composition are given in terms of the 
%volume element $dV_{\tilde g}$; however, we do not have to worry about its behavior near infinity due
%to these support assumptions.  From \eqref{K26'}, if the $N_0$ there is large enough,
%then, by simple facts about pseudo-differential operators acting on functions with a level of smoothness
%and a simple integration by parts argument in $t$,
%we see that if we replace $\cosco$ by $R_t$ in \eqref{K18'}, we would obtain a kernel $R(x,y)$ 
%satisfying
%\begin{equation}\label{K27}
%|R(x,y)|\le B\exp(BT) \, \la^{-2},
%\end{equation}
%for some constant $B$.  

If we let $R_t$ denote the integral operator with kernel $R(t,x,y)$, then, by 
\eqref{kernel} and \eqref{K26}, we have
\begin{equation}\label{K26'}\tag{3.16$'$}
\bigl|\partial_t^j \bigl(Q_{\theta,\la} \circ R_t\bigr)(x,y)\bigr|\le C\exp(cT), \quad j=0,1,2.
\end{equation}
By
a simple integration by parts argument in $t$,
we see from this that if we replace $\cosco$ by $R_t$ in \eqref{K18'}, we would obtain a kernel $R(x,y)$ 
satisfying
\begin{equation}\label{K27}
|R(x,y)|\le B\exp(BT) \, \la^{-2},
\end{equation}
for some constant $B$. 

Since this can be made small compared to $\la^{-1}$ for large $\la$ if the constant
$c$ in the definition of $T$ is chosen to be small enough, we would obtain \eqref{K20'} if we could obtain
similar bounds when we replace $\cosco$ by each of the Fourier integral operators whose kernels
are the two main terms in \eqref{K24} coming from the sum over $\pm$.

To simplify the calculation that will be involved let us make a couple of trivial reductions.

The first involves the pseudo-differential cutoff.  We recall that the kernel of $Q_{\theta,\la}$ is given
by 
$$Q_{\theta,\la}(x,w) =\int e^{i\langle x-w,\xi\rangle} Q_{\theta,\la}(x,w,\xi)\, d\xi,$$
where the symbol satisfies \eqref{K22} and \eqref{size}.  Since we are assuming that $\delta$ is smaller than
$1/2$ it follows that if 
$$\widetilde Q_{\theta,\la}(x,w)=\vp\bigl(\la^{\frac12}d_{\tilde g}(x,w)\bigr)
\int e^{i\langle x-w,\xi\rangle} Q_{\theta,\la}(x,w,\xi)\, d\xi,$$
then $R_\theta(x,w)=Q_{\theta,\la}(x,w)-\widetilde Q_{\theta,\la}(x,w)$ is also supported near our unit geodesic and
satisfies
$$|\partial^\alpha_{x,w}R_\theta(x,w)|\le C_{N,\alpha} \la^{-N} \quad \forall \, \alpha, N,$$
by \eqref{kernel}.
Therefore, by an argument similar to the one just given, if we replace $Q_{\theta,\la}$ by $\widetilde Q_{\theta,\la}$
and $\cosco$ in \eqref{K18'} by the sum of the two Fourier integrals in \eqref{K24}, 
then the difference between the kernel in  \eqref{K18'} and the resulting kernel will satisfy the bounds in \eqref{K27}.  Thus, in what follows,
we may replace $Q_{\theta,\la}$ by $\widetilde Q_{\theta,\la}$ and ignore the remainder term  in  \eqref{K24} in our calculations.  We have made this reduction to
make it simpler to apply \eqref{K21'}.

For the Fourier integral operators we shall note that we can make a Littlewood-Paley decomposition.  Specifically,
if $\beta\in C^\infty_0(\R)$, satisfies
\begin{equation}\label{littlewoodpaley}
\beta(s)=1, \, \, \text{if } \, s\in [1/2,2], \quad \text{and } \, \, \beta(s)=0, \, \, \text{if } \, \, s\notin [1/4,4],
\end{equation}
then we note that
$$T^{-1}\Bigl| \int \bigl(1-\vp(t)\bigr) \, \widehat \chi(t/T) \, \bigl(1-\beta(|\tau|/\la)\bigr) e^{i t(\la\pm \tau)} \, dt\, \Bigr|
\le C_N(\la+|\tau|)^{-N}, \quad \forall \, N,$$
where the $C_N$ are independent of $T\ge 1$.  As a result, we may also multiply the symbols in the 
Fourier integrals by $\beta(|\tau|/\la)$ in our calculation at the expense of introducing another error
satisfying the bounds which are better than those in \eqref{K27}.

Summarizing, in proving the remaining estimate 
to establish \eqref{K20'},
we may replace $Q_{\theta,\la}$ by $\widetilde Q_{\theta,\la}$ and
the two Fourier integrals in \eqref{K24} by the ones where the symbol is multiplied by $\beta(|\tau|/\la)$.
If we also abuse notation a bit and multiply the symbol of $ Q_{\theta,\la}$ by a smooth factor, but
still write it as $ Q_{\theta,\la}$,
to take into account that the integrations are given with respect to the volume element, we have reduced
matters to showing that 
\begin{multline}\label{K28}
T^{-1}\left| \iiint \bigl(1-\vp(t)\bigr) \, \widehat \chi(t/T) \, e^{it(\la\pm\tau)}e^{ix\cdot \xi}
 \, b_\pm(\la,x,w,y,\tau,\xi) \, e^{i\phi(w,y,\tau,\xi)} \, dw d\tau d\xi dt\right|
\\
\le B\exp(BT)\la^{-2} , 
\quad \text{if } \, \, 
d_{\tilde g}\bigl( x, \, \{\tilde \gamma(t): \, 0\le t\le 1\}\bigr)\le \la^{-\frac12} \, \, \text{and } \, \, 
 y\notin {{\mathcal T}_R(\tilde \gamma)},
 \end{multline}
 where the symbol here is given by
 $$b_\pm(\la,x,w,y,\tau,\xi)
 =\vp(\la^{\frac12}d_{\tilde g}(x,w)) \, 
\widetilde Q_{\theta,\la}(x,w,\xi) \, a_\pm(w,y,\tau) \,  \beta(|\tau|/\la),$$
 and the phase here is given by
 $$\phi(w,y,\tau,\xi)=\tau d_{\tilde g}(w,y)-w\cdot \xi.$$
 To proceed, we note that, due to the $\vp(\la^{\frac12}\cd)$ cutoff, the first condition in \eqref{K21'} is fulfilled
 on the support of the integrand
 and we are assuming the other one in \eqref{K27}.  Therefore, if we recall the $\xi$-support assumptions
 in \eqref{K22} for $Q_{\theta,\la}$, we can use \eqref{K21'} to see that, on the support of the integrand, there must be a positive constant $c_0$ so that
 for large $\la$
 \begin{multline*}
 |\nabla_w \phi(w,y,\tau,\xi)| \ge c_0\la^{-\frac\delta2} 
 \bigl(|\la|+|\tau|+|\xi| \bigr), \quad \text{if } \, \, b_\pm \ne 0, 
 \\
  d_{\tilde g}\bigl( x, \, \{\tilde \gamma(t): \, 0\le t\le 1\}\bigr)\le \la^{-\frac12}, \, \, \text{and } \, \, 
 y\notin {{\mathcal T}_R(\tilde \gamma)}.
 \end{multline*}
 Since $Q_{\theta,\la}$, $d_{\tilde g}$ and $a_\pm$   satisfy the size estimates in \eqref{size}, \eqref{K22} and \eqref{K25},
 respectively, we can integrate by parts a finite number of times to obtain \eqref{K28}.  We choose $N_0$
 in \eqref{K25} to be large enough to handle the number of integration by parts which depends only on the dimension.
 
 \medskip
 
 To wrap up matters, apart from proving the two Propositions, 
 to prove the Kakeya-Nikodym estimates
 we still have to prove \eqref{K19} and \eqref{K19''}.  We shall use
 a variation on the argument that we just gave which is considerably easier.
 
 To prove the former we use \eqref{K24}.  If we replace $(\cosco)(x,y)$ in the definition of $K(x,y)$ by the remainder
 term in \eqref{K24}, the resulting expression will clearly be $O(\exp(c_0T))$, which clearly is smaller than
 the right side of \eqref{K19} if $T=c\log \la$ with $c>0$ small provided that $d_{\tilde g}(x,y)\le T$, as we may
 assume since $K(x,y)$ vanishes otherwise by Huygens' principle.  If we replace $(\cosco)(x,y)$ by the main term
 in the Hadamard parametrix, \eqref{K24}, the resulting expression equals half of the sum over $\pm$ of
 $$\int_{-\infty}^\infty
 e^{i\tau d_{\tilde g}(x,y)}
  \chi\bigl(T(\la\pm \tau)\bigr) \, a_\pm(x,y,\tau) \, d\tau,$$
 and, using the first part of \eqref{K25} with $j=0$, this is also seen to satisfy the bounds in \eqref{K19} since
 $\chi\in {\mathcal S}(\R)$.
 
 To prove \eqref{K19''}, one uses a similar argument.  Using \eqref{K26'} for $j=0,1$ one sees by an integration by parts argument in $t$
 that the contribution of the remainder term to \eqref{K19''} is $O(\la^{-1}\exp(cT))$, which is much better than the bounds
 in \eqref{K19''} if $T=c\log\la$ with $c$ small.  Since we are assuming that $|d_{\tilde g}\pm t|\ge 1$ on the support of the
 integral defining $R(x,y)$, if we plug the main term of the Hadamard parametrix \eqref{K24} and integrate by parts in 
 $\tau$, we see that its contribution is $O(\la^{-N})$ for any $N$ since the Fourier transform of
 $t\to T^{-1}\bigl(1-\eta(t)\bigr)\widehat \chi(t/T)$ is bounded and rapidly decreasing at infinity.  Thus, its contribution also is much
 better than the bounds posited in \eqref{K19''}, which finishes its proof.
 
 \medskip
 
 Let us conclude this section by showing how these arguments imply the two-dimensional restriction estimates \eqref{K2}.
 If $\rho\in {\mathcal S}(\R)$ is as in \eqref{K3}, the bounds for eigenfunctions would follow from showing that
 we have the uniform bounds
 \begin{equation}\label{K2'} %\tag{1.4$'$}
 \|\rho(T(\la-P))f\|_{L^2(\gamma)}\le C(\la/\log\la)^{\frac14} \|f\|_{L^2(M)}, \, \, \, \text{if } \, \gamma\in \varPi,
 \, \, \text{and } \, \, T\approx \log \la.
 \end{equation}
 Since $\rho(0)=1$ this implies
 $$\|E_{[\la,\la+(\log\la)^{-1}]}f\|_{L^2(\gamma)}\lesssim (\la/\log\la)^{\frac14}\|f\|_{L^2(M)},
 $$
 which in turn, by a routine argument (cf. \cite{SZqm}) implies that quasi-modes satisfying the condition \eqref{qm} also
 satisfy the bounds in \eqref{K2}.
 
 If one uses the argument from  the  first part of the proof of \eqref{K1} along with \eqref{K11'},  one finds that
 $$\|(I-Q_{\theta,\la})\circ \rho(T(\la-P))f\|_{L^2(\gamma)}\lesssim_\e \theta^{-\frac12-\e}\|f\|_{L^2(M)},
 $$
 which is better than the bounds in \eqref{K2'} assuming, as we shall, that $\theta=\la^{-\frac12+\delta_0}$ for some
 $\delta_0>0$ and $\e<\delta_0$.  Thus, we have reduced the proof of \eqref{K2'} to showing that for suitable $\theta$ we have
 $$\|Q_{\theta,\la}\circ \rho(T(\la-P))f\|_{L^2(\gamma)}\le C(\la/\log\la)^{\frac14}\|f\|_{L^2(M)},$$
 assuming that, as before, $T=c\log\la$, for some fixed $c>0$.
 
 By a routine $TT^*$ argument if $\chi=|\rho|^2$, this is equivalent to showing that
 \begin{multline}\label{R1}
 \Bigl(\int_0^1\Bigr|\int_0^1 \bigl(Q_{\theta,\la}\circ \chi(T(\la-P)) \circ Q^*_{\theta,\la}\bigr)(\gamma(s),\gamma(s')) \, h(s')\, ds'
 \Bigr|^2 ds\Bigr)^{\frac12}
 \\
 \le C(\la/\log\la)^{\frac12}\|h\|_{L^2([0,1])}.
 \end{multline}
 
 Since $\chi(T(\la+P))(x,y)=O(\la^{-N})$, arguing as before, we see that, modulo a trivial error, we can express the kernel here
 as $\sum_{\alpha\in \Gamma}U^{\theta,\la}_\alpha$ where now
 $$U^{\theta,\la}_\alpha(s,s')=\Bigl(Q_{\theta,\la}\circ \chi\bigl(T(\la-P)\bigr)(\cd, \alpha(\cd))\circ Q^*_{\theta,\la}\Bigr)(\gamma(s),\gamma(s')).$$
 If $\alpha\ne Identity$,  the earlier arguments yield the uniform bounds
 $$|U^{\theta,\la}_\alpha(s,s')|\le CT^{-1}\la^{\frac12}\bigl(d_{\tilde g}(0,\alpha(0))\bigr)^{-\frac12}, \quad 0\le s,s'\le 1,$$
 and so 
 \begin{multline}\label{R2}
 \Bigl(\int_0^1\Bigl| \int_0^1 U_\alpha^{\theta,\la}(s,s')\, h(s') \, ds'\Bigr|^2 ds\Bigr)^{\frac12}
 \\
 \le CT^{-1}\la^{\frac12}\bigl(1+d_{\tilde g}(0,\alpha(0)\bigr)^{-\frac12}
 \|h\|_{L^2([0,1])}, \quad \alpha \ne Identity.
 \end{multline}
 
 We claim that the same bounds hold for $\alpha=Identity$.  Indeed if, as before, $\eta\in C^\infty_0(\R)$ satisfies $\eta(s)=1$, $|s|\le 2$
 and $\eta(s)=0$, $|s|\le 3$, it follows that when $\alpha$ is the identity the kernel can be split as
 \begin{multline}\label{R3}
 U^{\theta,\la}_{Id}(s,s')=\frac1{\pi T}
\Bigl(Q_{\theta,\la}\circ \Bigl(\, \int \eta(t)\widehat \chi(t/T) \cos t\sqrt{-\Delta_g} \, dt\, \Bigr) \circ Q^*_{\theta,\la}\Bigr)(\gamma(s),\gamma(s'))
\\
+R(s,s'),
\end{multline}
where, by the proof of \eqref{K19''}
$$|R(s,s')|\le CT^{-1}, \quad \text{if } \, \, 0\le s,s'\le 1.$$
Since $\text{Inj }M\ge 10$ one can use a parametrix for $\cos t\sqrt{-\Delta_g}$ and stationary phase and argue like in the proof of 
\cite[Lemma 5.1.3]{SFIO} to see that the first term in the right side of \eqref{R3} is $O(\la^{{\frac12}}(d_g(\gamma(s),\gamma(s')))^{-\frac12})
=O(\la^{\frac12}|s-s'|^{-\frac12})$, and so, by Young's inequality, \eqref{R2} is valid for $\alpha=Identity$ as well.

If $U^{\theta,\la}_\alpha$ are the operators with these kernels our earlier arguments give, analogous to what happened before,
$$\sum_{\alpha\in \Gamma_{{\Rtube}}}\|U^{\theta,\la}_\alpha\|_{L^2(\gamma)\to L^2(\gamma)}\lesssim 
T^{-1}\la^{\frac12}
%\sum_{1\le j\le T} j^{-\frac12}
\sum_{1 \leq 2^k \leq T} 2^k 2^{-\frac{k}2 }
\lesssim (\la/\log\la)^{\frac12}, \quad \text{if } \, \, T\approx \log\la.$$

By the support properties of $Q_{\theta,\la}(x,y)$, \eqref{kernel} and \eqref{K20'}, all the other terms are $O(\la^{-1})$ if the various
parameters are chosen as before.  Hence, if, as before, $T=c\log\la$ with $c>0$ sufficiently small, we have
$$\sum_{\alpha\notin \Gamma_{{\Rtube}}}\|U^{\theta,\la}_\alpha\|_{L^2(\gamma)\to L^2(\gamma)} \le 1,$$
which along with the previous bound (and the fact that $\chi(T(\la+P))(\gamma(s),\gamma(s'))=O(1)$) gives us
\eqref{R1}, which completes the proof of the restriction estimates.
 
 \medskip
 
 This completes the proof of Theorem~\ref{mainthm}, apart from the two Propositions, which we shall
 handle next.  
 %As we said before, we shall postpone the proof of the two-dimensional geodesic restriction estimate  \eqref{K2}, since it relies on a stronger version
 %of Proposition~\ref{tatprop} for $n=2$ than the one we have stated.
 
\newsection{Proof of  Proposition~\ref{topprop}} \hspace*{\fill} \\
Recall that we are trying to show that
$$C(\theta_T;T)\subset {\mathcal T}_R(\tilde \gamma), \quad \text{if } \, \sin \tfrac12\theta_T =\frac{\sinh \tfrac12 R}{\sinh T}.$$
Where we can take $P$, the vertex of $C(\theta_T;T)$, to be $\tilde \gamma(0)$, and $\tilde \gamma(t)$, $t\in \R$, to be the 
geodesic we have just been working with.  $C(\theta; T)$ then is the intersection of the geodesic ball of radius $T>0$ about our origin
with the cone of aperture $\theta$ about $\tilde \gamma$, which has coordinates $\{(t,0,\dots,0): \, t\in \R\}$
in the geodesic normal coordinate system we are using.
Also,
${\mathcal T}_R(\tilde \gamma)$ denotes the closed tube of fixed radius $R>0$ about $\tilde \gamma$. 
%and we are taking $T$ to be large.

Since $T\to \theta_T$ is monotonically decreasing, it suffices to show that a point $Q$ with coordinates $T\omega$, $\omega\in S^{n-1}$,
belongs to ${\mathcal T}_R(\tilde \gamma)$ if the angle, $\sphericalangle (\omega,\1)$, is $\le \theta_T$.  In other words, to obtain
\eqref{K6}, it suffices to show that
\begin{equation}\label{K6'} %\tag{1.6$'$}
\Sigma(T;\theta_T)\subset {\mathcal T}_R(\tilde \gamma),
\end{equation}
if $\Sigma(T,\theta)$ denotes all points $Q$ with coordinates $T\omega$ satisfying $\sphericalangle (\omega,\1)\le \theta$.

Clearly $\Sigma(T;\theta)\subset {\mathcal T}_R(\tilde \gamma)$ when $\theta$ is very small (depending on $T$).  So choose the maximal
$\Theta_T\le \pi/2$ so that $\Sigma(T;\theta)\subset {\mathcal T}_R(\tilde \gamma)$ when $0<\theta<\Theta_T$.  It follows
that there must be a point $Q$ with coordinates,   $T\omega_0$, satisfying $\sphericalangle(\omega_0,\1)=\Theta_T$ and $d_{\tilde g}(Q,\tilde \gamma)=R$.
Also, \eqref{K6'} is valid when $\theta_T$ is replaced by $\Theta_T$ and so we would have \eqref{K10'} and be done if we could show that
\begin{equation}\label{K6''} %\tag{1.6$''$} 
\Theta_T \ge \theta_T.
\end{equation}

At this point, we shall use Toponogov's theorem.  First consider the geodesic triangle,
$\triangle^{\tilde g}_{T,\Theta_T}$, in $(\Rn,\tilde g)$ with vertices $Q$
and the point with coordinates $0$ and the $P$ point with coordinates $(T,0,\dots,0)$.  It is an isosceles triangle since
the geodesics connecting the point with coordinates $0$ with $P$ and $Q$, respectively, each have length $T$.  The point
$P$ lies on $\tilde \gamma$ and hence if $\tilde \gamma_{opp}$ is the third side of our geodesic triangle,
which connects $P$ and $Q$, we must have that its length, $\ell(\tilde \gamma_{opp})$ satisfies
 $$\ell(\tilde \gamma_{opp})=d_{\tilde g}(P,Q)\ge R,$$
 since, as we pointed out before, we must have  
$d_{\tilde g}(Q,\tilde \gamma)=R$.  The angle at the vertex whose coordinates are the origin, by construction, is $\Theta_T$,
and the two sides passing through it each have length $T$.
The third side of our isosceles triangle, $\tilde \gamma_{opp}$, is called a ``Rauch hinge.''  See Figure~\ref{Rauch}.

\begin{figure}[h]
\hspace*{-2.5cm}                                                           
%\begin{center}
%\begin{centering}
\resizebox{4.2in}{2.1in}{
\input{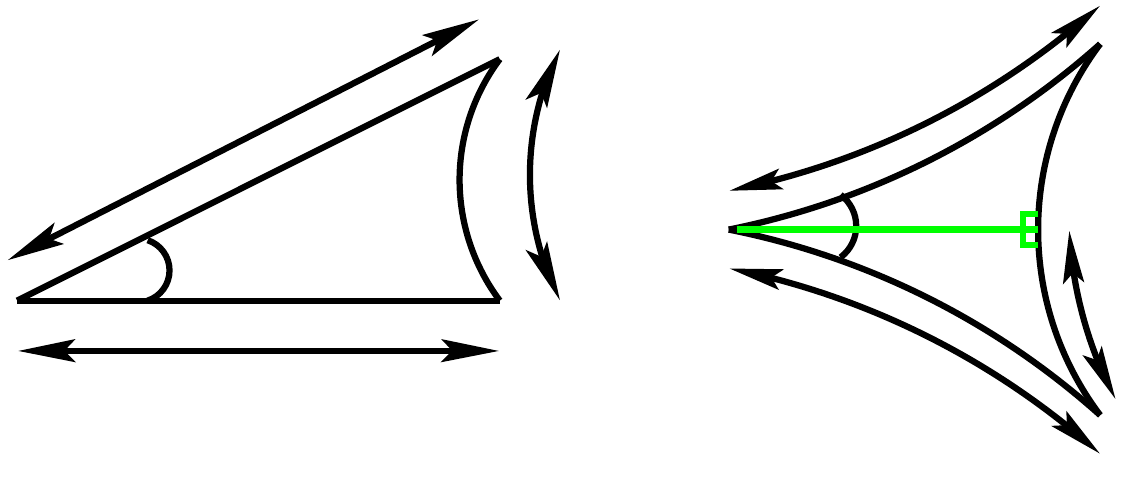_t}
}
%\end{center}
%\end{centering}
\caption{Rauch hinges}
\label{Rauch}
\end{figure}

Consider as well, an isosceles triangle, $\triangle^{{\mathbb H}^2}_{T,\Theta_T}$, in two-dimensional hyperbolic space, ${\mathbb H}^2$, having two sides of equal
length $T$, angle $\Theta_T$ at the associated vertex and  ``Rauch hinge" $\gamma_{opp}$, with length
$\ell(\gamma_{opp})$.  By Toponogov's theorem (see \cite[Theorem 2.2 (B)]{CE}), since we are assuming that the
sectional curvatures of $(\Rn,\tilde g)$ satisfy $-1\le K\le 0$, we must have
$$\ell(\gamma_{opp})\ge \ell(\tilde \gamma_{opp})\ge R.$$
By properties of isosceles triangles in ${\mathbb H}^2$, the ray bisecting the triangle at the vertex spanned by the two
sides of equal length $T$ must intersect the Rauch hinge, $\gamma_{opp}\in \triangle^{{\mathbb H}^2}_{T,\Theta_T}$, orthogonally at its midpoint (see Figure~\ref{Rauch}).  Consequently,
by the law of sines for hyperbolic space, we must have
$$\sin\tfrac12 \Theta_T =\frac{\sinh(\ell(\gamma_{opp})/2)}{\sinh T}\ge \frac{\sinh \tfrac12 R}{\sinh T}= \sin\tfrac12 \theta_T.$$
Thus, \eqref{K6''} is valid and the proof of Proposition~\ref{topprop} is complete. \qed

%\begin{figure}\centering
%\includegraphics[height=6cm]{}
%\caption{Rauch hinges}\label{Rauch}
%\end{figure}

\newsection{Proof of  Proposition~\ref{tatprop}} \hspace*{\fill} \\

As a first step in the proof of Proposition~\ref{tatprop}, let us make a preliminary reduction.
If $\beta$ is a Littlewood-Paley bump function as in \eqref{littlewoodpaley}, we claim
that it suffices to prove the following dyadic versions of the two inequalities in the Proposition:
\begin{equation}\label{E1}
\Bigl\|\int a(t) e^{-it\la}  \bigl(I-Q_{\theta,\la}\bigr)\circ \beta(P/\la)  e^{itP}f\, dt\Bigr\|_{L^2(\tube)}
\le C_{\delta_0}\la^{-\frac14}\theta^{-\frac12}\|f\|_{L^2(M)},
\end{equation}
and
\begin{equation}\label{E2}
\Bigl\|\int a(t) e^{-it\la}  \bigl(I-Q_{\theta,\la}\bigr) \circ \beta(P/\la) e^{itP}f\, dt\Bigr\|_{L^2(\gamma)}
\le C_{\delta_0,\e}\theta^{-\frac12-\e}\|f\|_{L^2(M)},
\end{equation}
assuming, as in the Proposition that $\theta \ge \la^{-\frac12+\delta_0}$ with $\delta_0$, $\e>0$,
and that $c_0>0$ in \eqref{dyadic} is sufficiently small.

To verify this claim, we note that
\begin{equation}\label{E3}
\int a(t)e^{-i\la t}\, \bigl(1-\beta\bigr)(P/\la)e^{itP} \, dt
\end{equation}
has kernel
$$\sum \widehat a(\la-\la_j)\bigl(1-\beta\bigr)(\la_j/\la) \, e_j(x)\overline{e_j(y)}.$$
Since $a\in C^\infty_0(\R)$ and $\beta$ is as in \eqref{littlewoodpaley},
$$|\widehat a(\la-\la_j) (1-\beta)(\la_j/\la)|\le C(1+\la+\la_j)^{-n-2}.$$
Since, by the Weyl formula, we have
$$\sum_{\la_j\in [\la,\la+1]} |e_j(x)e_j(y)|\le C(1+\la)^{n-1},$$
we conclude that the kernel of the operator given by \eqref{E3} is
$O(\la^{-1})$.  By the first part of \eqref{kernel}, the same is true for the 
kernel of $Q_{\theta,\la}$ composed on the left with this operator, and therefore
$$\int a(t)e^{-i\la t}\, (I-Q_{\theta,\la})\circ \bigl(1-\beta\bigr)(P/\la)e^{itP} \, dt$$
must have a $O(\la^{-1})$ kernel.  This means that it enjoys better bounds than
those posited in Proposition~\ref{tatprop}, which gives us our claim that we just
need to prove \eqref{E1} and \eqref{E2}.

These two inequalities will be a simple consequence
of the following

\begin{lemma}\label{tatlemma}
Fix $0<\delta_0<1/2$, and assume that $\theta\ge \la^{-\frac12+\delta_0}$ and that $\beta$ is as in \eqref{littlewoodpaley}.  Then we have the following estimates which are uniform in $\gamma\in
\varPi$.  First, there is a uniform $C_0<\infty$ so that for large $\la$ we have
\begin{multline}\label{K29}
\Bigl|\Bigl((I-Q_{\theta,\la})\circ  \beta(P/\la)e^{itP}\circ (I-Q_{\theta,\la})^*\Bigr)(x,y)\Bigr|
\le C_{N,\delta_0} \la^{-N} \quad \forall \, N,
\\ \text{if } \, C_0\la^{-\frac12}\theta^{-1}\le |t|\le 1, \, \, \, \text{and } \, \, x,y\in \tube.
\end{multline}
Also, fix $0<\e<\delta_0$.  Then for $s,s'\in [0,1]$ and large $\la$ we have
\begin{multline}\label{K29'}
\Bigl|\Bigl((I-Q_{\theta,\la})\circ \beta(P/\la)e^{itP}\circ (I-Q_{\theta,\la})^*\Bigr)(\gamma(s),\gamma(s'))\Bigr|
\le C_{N,\delta_0}\la^{-N} \quad \forall \, N,
\\ \text{if }  \, \, \la^{-1}\theta^{-2-\e}\le |s-s'|\le 1.
\end{multline}
\end{lemma}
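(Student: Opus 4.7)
\medskip

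\noindent\emph{Proof proposal.} The plan is to write the kernel of $(I-Q_{\theta,\la})\beta(P/\la)e^{itP}(I-Q_{\theta,\la})^*$ as an oscillatory integral in a frequency variable $\xi$ and to extract the stated decay by a non-stationary-phase integration by parts in $\xi$. Since $|t|\le 1$ (from the cutoff $a$) lies below the injectivity radius, in geodesic normal coordinates along $\gamma$ one can express
$$
\bigl(\beta(P/\la)e^{itP}\bigr)(x,y)=\int_{\Rn}e^{i\Phi(t,x,y,\xi)}b_0(t,x,y,\xi)\,d\xi + O(\la^{-\infty}),
$$
where $\Phi$ is homogeneous of degree one in $\xi$ with leading part $\langle x-y,\xi\rangle - t|\xi|_{g(y)}$, and $b_0$ is a standard symbol supported in $|\xi|\sim\la$. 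Composing with $(I-Q_{\theta,\la})$ on the left and right produces a compound-symbol oscillatory integral. Since $x,y\in\tube$ and $\theta\ge\la^{-1/2+\delta_0}\gg\la^{-1/2}$, the spatial cutoffs in $Q_{\theta,\la}$ equal one on the support we work with, so the only effective constraint coming from $1-Q_{\theta,\la}$ is on the frequency direction: $|\xi'|/|\xi|\ge c\theta$, i.e., $\xi/|\xi|$ makes angle $\ge c\theta$ with $\pm\1$. The combined amplitude $b(t,x,y,\xi)$ satisfies $|\nabla_\xi^k b|\lesssim\theta^{-k}\la^{-k}$, with the $\theta^{-1}$ loss per derivative coming from the angular cutoff $1-\alpha(\theta^{-1}|\xi'|/|\xi|)$.

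Next I would establish a lower bound for $|\nabla_\xi\Phi|\approx|(x-y)-t\,\xi/|\xi||$. For \eqref{K29}, the perpendicular component of $x-y$ is $O(\la^{-1/2})$ by the tube hypothesis, while the perpendicular component of $t\,\xi/|\xi|$ has magnitude $\ge c|t|\theta$. Taking $C_0$ large enough that $C_0\la^{-1/2}\theta^{-1}\cdot\theta \ge 2\la^{-1/2}$ ensures the latter dominates, so $|\nabla_\xi\Phi|\ge c|t|\theta$. For \eqref{K29'}, $x-y=\gamma(s)-\gamma(s')$ is essentially along $\1$ with $|x-y|\approx|s-s'|$, and with $\sigma=\xi_1/|\xi|$, $\sigma^2\le 1-\theta^2$, one computes
$$
|\nabla_\xi\Phi|^2 \ge ((s-s')-t\sigma)^2 + t^2(1-\sigma^2).
$$
A dichotomy on $|t|$ vs.\ $|s-s'|/2$ (either $|t|\le|s-s'|/2$, so $|(s-s')-t\sigma|\ge|s-s'|/2$, or $|t|\ge|s-s'|/2$, so $t^2(1-\sigma^2)\ge|s-s'|^2\theta^2/4$) yields $|\nabla_\xi\Phi|\ge c|s-s'|\theta$ in all cases.

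Given these lower bounds, each integration by parts in $\xi$ contributes a gain
$$
\frac{|\nabla_\xi b|}{|\nabla_\xi\Phi|}+\frac{|\nabla_\xi^2\Phi|}{|\nabla_\xi\Phi|^2}\ \lesssim\ \frac{1}{\theta\la\,|\nabla_\xi\Phi|},
$$
using $|\nabla_\xi^2\Phi|\lesssim|t|/\la$ together with $|t|\theta\lesssim|\nabla_\xi\Phi|$ to absorb the second term. For \eqref{K29} this is $(\la|t|\theta^2)^{-1}\le C_0^{-1}(\la^{1/2}\theta)^{-1}\le C_0^{-1}\la^{-\delta_0}$, using $|t|\theta\ge C_0\la^{-1/2}$ and $\theta\ge\la^{-1/2+\delta_0}$. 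For \eqref{K29'} it is $(\la|s-s'|\theta^2)^{-1}\le\theta^\e\le\la^{-(1/2-\delta_0)\e}$, using $|s-s'|\ge\la^{-1}\theta^{-2-\e}$ and $\theta\ge\la^{-1/2+\delta_0}$. In either case each step is a negative power of $\la$, so iterating a number of times depending on $N$ yields $O(\la^{-N})$.

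The main obstacle will be justifying the leading-order form of $\nabla_\xi\Phi$ and ensuring the compound-symbol estimates hold uniformly. Metric corrections to the Hadamard phase perturb $\nabla_\xi\Phi$ by terms $O(d_{g}(x,y)^2)$ and the symbol estimates by lower-order terms that do not spoil the $\theta^{-k}\la^{-k}$ bounds; since $|x-y|\lesssim 1$ and $|\xi|\sim\la$, these corrections are at most $O(\la^{-1/2})$ in the relevant geometric quantities and are dominated by the main term of the lower bound under the standing assumption $\theta\ge\la^{-1/2+\delta_0}$. A secondary technical point is the derivative bound on the angular cutoff, which follows from $|\nabla_\xi(|\xi'|/|\xi|)|\lesssim|\xi|^{-1}$ on the annular support $|\xi|\sim\la$, so each $\xi$-derivative of $1-\alpha(\theta^{-1}|\xi'|/|\xi|)$ costs precisely $\theta^{-1}\la^{-1}$, as used above.
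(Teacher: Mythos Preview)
Your proposal is correct and follows essentially the same strategy as the paper: write the kernel as an oscillatory integral with phase of the form $\langle x-y,\xi\rangle + t|\xi|$ (after frequency localization via $\beta(P/\la)$), observe that the symbol of $(I-Q_{\theta,\la})\circ\beta(P/\la)$ is supported where $|\xi'|/|\xi|\gtrsim\theta$, and then integrate by parts in $\xi$ using a lower bound on the phase gradient.

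The differences are organizational rather than substantive. The paper sets up geodesic normal coordinates centered at $x$ so that the phase is \emph{exactly} $-y\cdot\xi+t|\xi|$ in Euclidean form, which sidesteps your ``metric corrections'' paragraph entirely. For \eqref{K29} the paper integrates by parts only in the $\xi'$ variables, obtaining the lower bound $|t\,\xi'/|\xi|-y'|\ge\la^{-1/2}$ directly, whereas you bound the full gradient by $c|t|\theta$; these are equivalent once $|t|\ge C_0\la^{-1/2}\theta^{-1}$. For \eqref{K29'} the paper's dichotomy is on $|t|$ versus the fixed threshold $\tfrac12\la^{-1}\theta^{-2-\e}$ (integrating by parts in $\xi_2$ for large $|t|$, in $\xi_1$ for small $|t|$), while yours is on $|t|$ versus $|s-s'|/2$; both give the same gain $\theta^\e\le\la^{-(\frac12-\delta_0)\e}$ per step. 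Your handling of the Hessian term via $|t|\theta\lesssim|\nabla_\xi\Phi|$ is a nice uniform way to absorb it and is implicit in the paper's separate $\xi'$ and $\xi_1$ integrations.
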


\begin{REMARK}There is a simple explanation for the lower bounds for the
two time scales in this lemma.  The one in \eqref{K29} is
a lower bound for
the escape time for balls of radius $(\theta\la)^{-1}$ to exit $\tube$ if they are traveling 
along geodesics forming an angle $\ge \theta$ from its center, $\gamma$.  Apart from the the $\e>0$,
the lower bound for $|s-s'|$ in \eqref{K29'} is a lower bound for them to escape from $\gamma$.
See Figure~\ref{Escape}.  These escape times are much smaller than $1$ due to our
assumption that $\theta\ge \la^{-\frac12+\delta_0}$.  Also, as $\theta$ increases the ball radius and the
escape route both become more favorable.
\end{REMARK}

%\begin{figure}[h]
%\centering
%\includegraphics[height=4cm]{Escape.jpg}
%\caption{Escape times}\label{Escape}
%\end{figure}

\begin{figure}[h]
%\hspace*{-2.5cm}                                                           
%\begin{center}
%\begin{centering}
\resizebox{4.5in}{1.5in}{
\input{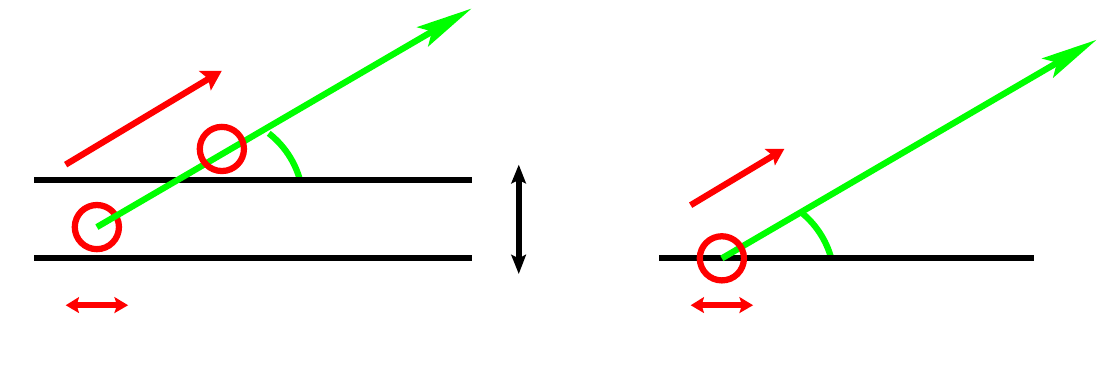_t}
}
%\end{center}
%\end{centering}
\caption{Escape times}\label{Escape}
\end{figure}

%\resizebox{4.5in}{1.5in}{\input{escape_times.pdf_t}}

\begin{proof}[Proof of \eqref{E1} and \eqref{E2}]
By using smooth cutoffs in the $t$ variable and the fact that the half-wave operators
$e^{itP}$ are bounded on $L^2(M)$ with norm $1$, we deduce that in proving
\eqref{E1} and \eqref{E2}, we may assume that the smooth function $a$ there
is supported in $(-1/10,1/10)$.  The first inequality, \eqref{E1}, then is just the statement that
$$\bigl\|(I-Q_{\theta,\la})\widehat a(P-\la) \beta(P/\la)\bigr\|_{L^2(M)\to L^2(\tube)}
=O(\la^{-\frac14}\theta^{-\frac12}).$$
%We may assume $a$ and $\beta$ are real-valued.  Then if $\widetilde a=a*a$ and
If $\widetilde a = a(\cd)*\overline{a(-\cd)}$ and $\widetilde \beta=|\beta|^2$, then
a routine $TT^*$ argument shows that this is equivalent
to
\begin{align}\label{K30}
\bigl\|(I-Q_{\theta,\la})&\circ |\widehat a|^2(P-\la) \, \widetilde \beta(P/\la) \circ (I-Q_{\theta,\la})^*f\bigr\|_{L^2(\tube)}
\\
&=\Bigl\|\int \widetilde a(t) e^{-i\la t} (I-Q_{\theta,\la}) \circ \widetilde\beta(P/\la)e^{itP} \circ (I-Q_{\theta,\la})^* f
\, dt\Bigr\|_{L^2(\tube)}
\notag
\\ 
&\le C\la^{-\frac12}\theta^{-1}\|f\|_{L^2}, \quad \text{if } \, \, \, \text{supp }f\subset \tube. \notag
\end{align}
The function $\widetilde a$ is supported in $|t|\le 1$ and $\widetilde \beta$ is as in \eqref{littlewoodpaley} if $\beta$
satisfies those conditions.  Therefore, we can use \eqref{K29} to see
that for $f$ supported in $\tube$ we have that the left side of this inequality is dominated
by
\begin{multline*}
\int_{|t|\le C_0\la^{-\frac12}\theta^{-1}}  \Bigl\|(I-Q_{\theta,\la})\circ \widetilde \beta(P/\la) e^{itP}\circ
(I-Q_{\theta,\la})^*f \Bigr\|_{L^2(M)} \, dt \,  \, + \, \, O(\la^{-N})\|f\|_{L^2(M)}
\\
\lesssim \la^{-\frac12}\theta^{-1}\|f\|_2,
\end{multline*}
using in the last step the fact that $e^{itP}$ is unitary and the fact that the operators
$(I-Q_{\theta,\la})$ and $(I-Q_{\theta,\la})^*$ are uniformly bounded on $L^2$ for $\theta$
as above by virtue of \eqref{kernel}.  Hence, we have \eqref{K30}.

To prove \eqref{E2}, we use another $TT^*$ argument to see that the inequality is 
equivalent to the statement that for $n=2$ we have
\begin{equation}\label{K31}
% \Bigl(\int_0^1\Bigl|\int_0^1 \widetilde a(t) e^{-it\la}\bigl((I-Q_{\theta,\la})\circ \widetilde \beta(P/\la)
%e^{itP}\circ (I-Q_{\theta,\la})^*\bigr)
\Bigl(\int_0^1\Bigl|\int_0^1
K\bigl(\gamma(s),\gamma(s')\bigr) \, h(s')\, ds'\Bigr|^2 ds \Bigr)^{1/2}
\le C\theta^{-1-\e}\|h\|_{L^2([0,1])},
\end{equation}
if 
$$K\bigl(\gamma(s),\gamma(s')\bigr)=\int \widetilde a(t)e^{-i\la t} \Bigl((I-Q_{\theta,\la})\circ \widetilde \beta(P/\la) e^{itP}\circ
(I-Q_{\theta,\la})^*\Bigr)(\gamma(s),\gamma(s')) \, dt.$$
Choose $\rho\in C^\infty_0(\Rn)$ now satisfying $\rho(s)=1$, $|s|\le 1$, and 
$\text{supp }\rho\subset [-2,2]$.  Then, by \eqref{K29'}, modulo an $O(\la^{-N})\|h\|_2$ error,
the left side of \eqref{K31} equals
\begin{equation}\label{K32}\Bigl(\int_0^1\Bigl|\int_0^1
K_{\la,\theta}\bigl(\gamma(s),\gamma(s')\bigr) \, h(s')\, ds'\Bigr|^2 ds\Bigr)^{1/2},\end{equation}
if
\begin{multline}\label{K33}
K_{\la,\theta}(\gamma(s),\gamma(s'))
\\
=\int  \rho(\la \theta^{2+\e}t)  \, \widetilde a(t)e^{-i\la t} \bigl((I-Q_{\theta,\la})\circ \widetilde \beta(P/\la) e^{itP}\circ
(I-Q_{\theta,\la})^*\bigr)(\gamma(s),\gamma(s')) \, dt.\end{multline}
Our assumptions give that $\la \theta^{2+\e}\gg 1$, and so the above integral is just over
a small interval of length $\lesssim \la^{-1}\theta^{-2-\e}$.  We claim that for parameters
as above we have the uniform estimates
\begin{multline}\label{K34}
|K_{\la,\theta}(\gamma(s),\gamma(s'))|\le C\la^{\frac12}  |s-s'|^{-\frac12}, \\ \text{if } \, \, 
s,s'\in [0,1], \, \, \quad \text{and } \, \, |s-s'|\le 10\la^{-1}\theta^{-2-\e},
\end{multline}
and
\begin{multline}\label{K35}
|K_{\la,\theta}(\gamma(s),\gamma(s'))|\le 
 C_N\la^{-N} \, \, \forall \, N, \\ \text{if } \, \, \, 
\, \, \, 
s,s'\in [0,1], \, \, \quad\text{and } \, \, 
|s-s'| \ge 10\la^{-1}\theta^{-2-\e}.
\end{multline}

If we had this we would obtain \eqref{K31}.  For by \eqref{K34}--\eqref{K35} and the 
fact that \eqref{K32} agrees with the left side of \eqref{K31} up to trivial errors,
we conclude that, up to such errors, the left side of \eqref{K31} is dominated by
\begin{equation}\label{K31'}\tag{5.7$'$}
\la^{\frac12} \, \Bigl(\int_0^1\Bigl|\int_{|s-s'|\le 10\la^{-1}\theta^{-2-\e}} h(s') \, |s-s'|^{-\frac12}\, ds'
\Bigr|^2\, ds\Bigr)^{1/2} \le C\theta^{-1-\e}\|h\|_{L^2},
\end{equation}
as desired, by Young's inequality.

We shall postpone the proof of \eqref{K34}--\eqref{K35} for the moment since it follows
from arguments that we shall use in the proof of Lemma~\ref{tatlemma}.
\end{proof}

\begin{proof}[Proof of Lemma~\ref{tatlemma}]
We need to prove \eqref{K29} and \eqref{K29'}.  The proof is very similar to the proof
of (3-14) in \cite{BS}.

To prove \eqref{K29}, let us first
assume that the $x\in \tube$ lies on our $\gamma\in \varPi$.  The kernel in the left
side of \eqref{K29} is a Lagrangian distribution in $y$.  If we choose geodesic normal
coordinates vanishing at $x$ so that $\gamma$ is part of the first coordinate axis
then, assuming as we are in \eqref{K29} that $y$ is of distance $\le 1$ from
$x$, modulo $O(\la^{-N})$ errors, 
if the $c_0$ in \eqref{dyadic} is sufficiently small,
this kernel is of the form
\begin{equation}\label{K36}I_t(y)=\int_{\Rn} e^{-iy\cdot \xi+it|\xi|} \, a_{\theta,\la}(t,y,\xi) \, d\xi,
\end{equation}
with amplitude satisfying
\begin{equation}\label{K9'} %\tag{1.8$'$}
a_{\theta,\la}(t,y,\xi)=0, \quad \text{if } \, \, \,
|\xi|\notin [\la/10,10\la], \, \, \, \text{or }
 |\xi'|/|\xi|\le 2B\theta, \, \, \text{if }\, \, \xi'=(\xi_2,\dots,\xi_n),
\end{equation}
for some $B$ independent of $\la$, $\theta$,
by virtue of \eqref{PDO} and \eqref{K10}, as well as
\begin{equation}\label{K10'} %\tag{1.9$'$}
|\partial_t^jD^\alpha_\xi a_{\theta, \la}(t,y,\xi)|\le C_{\alpha,j} (\la/\theta)^{-|\alpha|}.
\end{equation}
We  are using here the fact that if $c_0$ in \eqref{dyadic} is small enough
then the symbol of the dyadic operators $(I-Q_{\theta,\la})\circ \beta(P/\la)$ must vanish in
a cone of aperture $\approx \theta$ about the $\xi_1$-axis because of \eqref{PDO}
and \eqref{dyadic} and the fact that the symbol of $\beta(P/\la)$ is supported
in the region were $|\xi|\approx \la$ by \eqref{littlewoodpaley}.

We would get \eqref{K29} for $x\in \gamma$ if we could show that
\begin{equation}\label{K37}
I_t(y)=O(\la^{-N}), \quad \text{if } \, \, \, |y'| \le C_1 \la^{-\frac12}, \, \, \,
\text{and } \, \, \, |t|\ge C_0\la^{-\frac12}\theta^{-1},
\end{equation}
for some large constant $C_0$ (independent of $\theta$ and $\la$)
since, in the coordinates we are using, $|y'| \le C_1 \la^{-\frac12}$,
is is satisfied with $C_1$ fixed by all points in $\tube$ with coordinates $y$.

To prove this we note that we can obtain favorable lower bounds for the
phase functions for the Lagrangian distributions in \eqref{K36}.
Specifically, we have for points in $\tube$ satisfying the first condition
in \eqref{K37}
\begin{multline*}
\bigl|\nabla_{\xi'}\bigl(-y\cdot \xi+t|\xi|\bigr)\bigr|=\bigl| t\tfrac{\xi'}{|\xi|}-y'\bigr|
\ge  \bigl|\, t\tfrac{\xi'}{|\xi|}\, \bigr| -C_1\la^{-\frac12}
\\
\ge \la^{-\frac12}, \quad \text{if }\, \, \, 
|\xi'|/|\xi|\ge B|\theta|, \, \, \, \text{and } \, \, \, |t|\ge C_0\la^{-\frac12}\theta^{-1},
\end{multline*}
provided that $C_0B\ge 1$.
Using this, by virtue of \eqref{K9'}--\eqref{K10'}, one obtains \eqref{K37} by a simple integration by parts argument.  Indeed, by \eqref{K10'}, 
each integration by parts yields a gain of $\approx \la^{-\frac12}/\theta
\ge \la^{-\delta_0}$, since we are assuming that $\theta\ge \la^{-\frac12+\delta_0}$.

Thus, we have proved \eqref{K29} when $x$ lies on $\gamma$.  The same argument will
work for $x\in \tube$ since $\la^{-\frac12} \ll \theta$.  Given any $x\in \tube$,  pick the geodesic segment
$\gamma_x\in \varPi$ of closest distance to $\gamma$, using the standard metric on $S^*M$ and identifying
$\varPi$ with $S^*M$.
%(Note that $\varPi\approx S^*M$ is a smooth compact manifold.)  
Because of the relationship between $\theta$ and $\la^{-\frac12}$ the operators
$Q_{\theta,\la}$ also localize to a  $\approx \theta$ neighborhood of $S^*\gamma_x$, just as it does for
$S^*\gamma$, due to the fact that
the distance between the central geodesic $\gamma$ and any such $\gamma_x$ must be
$O(\la^{-\frac12})$ when $x\in \tube$.  So if we choose geodesic normal coordinates
as above about $x$, we still can write the kernel as a function of the other variables
as in \eqref{K36}.  So the argument for the special case that we have just completed
implies that the same bounds are valid for all $x\in \tube$, which completes the proof
of \eqref{K29}.

The proof of \eqref{K29'} is very similar.  If we use geodesic normal coordinates
about  $\gamma(s)$ as above then, modulo trivial errors, the kernels in the left side of \eqref{K29'}
are of the form
\begin{equation}\label{K36'}\tag{5.5$'$}
I_t(s,s') =\int_{{\mathbb R}^2} e^{-i(s-s')\xi_1 +it|\xi|} a_{\theta,\la}(t,s,s',\xi)\, d\xi,
\end{equation}
with amplitudes satisfying \eqref{K9'}--\eqref{K10'} (with $y$ replaced by $(s,s')$).
Clearly if $|\xi_2 |/|\xi|\ge B\theta$ and if $|t|\ge \tfrac12\la^{-1}\theta^{-2-\e}$, we have
$$\bigl|\tfrac\partial{\partial \xi_2} \bigl(-(s-s')\xi_1+t |\xi| \bigr) \bigr|\ge B\la^{-1}\theta^{-1-\e}.$$
Thus, by \eqref{K9'}, every time we integrate by parts in $\xi_2$ we gain
by $\theta^{\e} =\la^{-\delta_0\e}$, and, therefore, since we are assuming that $\delta_0$ and $\e$
are both positive, we have for large $\la$
$$I_t(s,s')=O(\la^{-N}), \quad \text{if } \, \, \, |t|\ge \tfrac12\la^{-1}\theta^{-2-\e},$$
which means that we have \eqref{K29'} with no condition on $s,s'\in [0,1]$,
if $|t|\ge \tfrac12 \la^{-1}\theta^{-2-\e}$.  If 
%$|t|\ge \tfrac12 \la^{-1}\theta^{-2-\e}$
%$\la^{-1}\theta^{-2-\e}\ll \la^{-1}\theta^{-2-\e/2}$, time 
we integrate by parts in $\xi_1$ we find that we also have
\begin{equation}\label{K38}I_t(s,s')=O(\la^{-N}), \quad \text{if } \, \, \, |t|\le \tfrac12 \la^{-1}\theta^{-2-\e},
\, \, \, \text{and } \, \, |s-s'|\ge \la^{-1}\theta^{-2-\e},\end{equation}
which yields the remaining part of \eqref{K29'}.
\end{proof}

\begin{proof}[Proof of \eqref{K34}--\eqref{K35}]
The kernels involved are just
\begin{multline}\label{S17}
\int \rho(\la \theta^{2+\e}t) \widetilde a(t) e^{-i\la t}I_t(s,s')\, dt
\\
=\iint \rho(\la\theta^{2+\e}t) \tilde a(t) e^{i(s-s')\xi_1} a_{\theta,\la}(t,s,s',\xi) \, e^{it(|\xi|-\la)} \, dt d\xi.
%=(\la \theta^{2+\e})^{-1}\int_{{\mathbb R}^2} e^{i(s-s')\xi_1} 
%a_{\theta,\la}(t,s,s',\xi) \, 
%\Psi_{\la,\theta}(\la^{-1}\theta^{-2-\e}(\la-|\xi|)) \, d\xi,
\end{multline}
%where $\Psi_{\la,\theta}$ denotes the Fourier transform
%of $t\to \rho(t)\widetilde a(t/\la \theta^{2+\e})$.
%Note that 
%$$|\Psi_{\la,\theta}(\tau)|\le C_{N}(1+|\tau|)^{-N}, \quad \forall \, N,$$ with bounds independent
%of $N$ since we are assuming that $\theta^{2+\e}\ge \la^{-1}$.

In view of the support properties of $\rho$, \eqref{K35} follows from the proof
of \eqref{K38}, since the latter shows that $I_t(s,s')$ is rapidly decreasing
if $|s-s'|\ge 2|t|$ and if $|t|\lesssim \la^{-1}\theta^{-2-\e}$.

To prove \eqref{K34}, 
let us first note that if $\widetilde K_{\theta,\la}$ denotes the analog of the kernel in \eqref{K33} but without
the $(I-Q_{\theta,\la})$ operators in the left and right, i.e.,
$$\widetilde K_{\theta,\la}\bigl(\gamma(s),\gamma(s')\bigr)=\int \rho(\la\theta^{2+\e}t) \widetilde a(t) e^{-i\la t}\bigl(\widetilde \beta(P/\la) 
e^{itP}\bigr)(\gamma(s),\gamma(s')) \, dt,$$
then
\begin{equation}\label{K34'}\tag{5.10$'$}
|\widetilde K_{\theta,\la}(\gamma(s),\gamma(s'))| =O\bigl(\la^{\frac12}|s-s'|^{-\frac12}\bigr).
\end{equation}
To see this we use the Hadamard parametrix and the calculus of Fourier integrals to see that modulo a $O(\la^{-N})$ error term
$$\bigl(\widetilde \beta(P/\la) 
e^{itP}\bigr)(\gamma(s),\gamma(s'))=\int_{{\mathbb R}^2}e^{i(s-s')\xi_1+it|\xi|}\alpha(t,s,s',|\xi|) \, d\xi,$$
where $\alpha$ is a zero-order symbol.  Thus, modulo trivial errors,
\begin{multline}\label{S18}
\widetilde K_{\theta,\la}(\gamma(s),\gamma(s'))
=  
\\
\int \int_0^\infty e^{it(r-\la)} \rho(\la\theta^{2+\e}t) \alpha(t,s,s',r)r \, \Bigl( \int_{S^1} e^{ir(s-s') \langle (1,0),\omega \rangle} \,
 d\omega \, \Bigr) \, drdt.
\end{multline}
Integrating by parts in $t$ shows that this expression  is majorized by
\begin{multline*}
\int_0^\infty (\la \theta^{2+\e})^{-1} \, \bigl(1+\la^{-1}\theta^{-2-\e}|r-\la| \bigr)^{-3} \, rdr
\\
=\la \int_0^\infty \theta^{-2-\e}\, \bigl(1+\theta^{-2-\e}|r-1|\bigr)^{-3} \, rdr =O(\la), 
\end{multline*}
since $\theta^{-2-\e}>1$.  Thus, \eqref{K34'} is valid when $|s-s'|\le \la^{-1}$.  To handle the remaining case, we 
recall that, by stationary phase,
\begin{equation}\label{5.19}
\int_{S^1}e^{ix\cdot \omega}\, d\omega =O(|x|^{-\frac12}), \quad |x|\ge1.
\end{equation}
If we plug this 
into \eqref{S18} with $x=r(s-s',0)$, and, as above, integrate by parts in $t$, we conclude that when $\la^{-1}\lesssim |s-s'|$ we have
\begin{multline*}
|\widetilde K_{\theta,\la}(\gamma(s),\gamma(s'))| \lesssim \int_0^\infty (\la\theta^{2+\e})^{-1} \, \bigl(1+\la^{-1}\theta^{-2-\e}|r-\la|\bigr)^{-3}
\, \bigl(r|s-s'|\bigr)^{-\frac12} \, rdr
\\
=O\bigl(\la^{\frac12}|s-s'|^{-\frac12}\bigr),
\end{multline*}
as claimed, which finishes the proof of \eqref{K34'}.

Next, since the symbol of $Q_{\theta,\la}$ is as in \eqref{PDO}, it follows that if the constant $c_0>0$ in \eqref{dyadic} is small enough
then $Q_{\theta,\la}\circ \widetilde\beta(P/\la)$ and $\widetilde\beta(P/\la)\circ Q_{\theta,\la}^*$ both have symbols supported in the set
where $|\xi|\approx \la$ and $|\xi_1|/|\xi|\le C\theta$ for some constant $C$.  Thus, it follows that $K_{\theta,\la}-\widetilde K_{\theta,\la}$
can be written in the form \eqref{S17} except with $I_t(s,s')$ replaced by
$$\widetilde I_t(s,s')=\int_{{\mathbb R}^2} e^{i(s-s')\xi_1+it|\xi|} \widetilde a_{\theta,\la}(t,s,s',\xi) \, d\xi,$$
where the amplitude here satisfies the bounds in \eqref{K10'}, but in place of \eqref{K9'}, we have
\begin{equation}\label{S20}
\widetilde a_{\theta,\la}(t,s,s',\xi)=0\, \, \, \text{if } \, \, |\xi|\notin [\la/10,10\la] \, \, \, \text{or } \, \, |\xi_2|/|\xi|\ge C\theta.
\end{equation}
  From this,   and straighforward analogs of the above calculations, we deduce that, modulo trivial errors,
\begin{multline*}
\bigl|K_{\theta,\la}((\gamma(s),\gamma(s'))-\widetilde K_{\theta,\la}((\gamma(s),\gamma(s'))\bigr|
\\
\lessim (\la\theta^{2+\e})^{-1}
\int_{ \{\xi\in {\mathbb R}^2: \, |\xi_2|\le C\theta|\xi|\}     } 
\bigl(1+\la^{-1}\theta^{-2-\e}|\, |\xi|-\la \, |\bigr)^{-3} d\xi
 =O(\la\theta).
\end{multline*}
As $\la\theta\le \la^{\frac12}|s-s'|^{-\frac12}$ if $|s-s'|\le \la^{-1} \theta^{-2}$, by \eqref{K34'}, it suffices to show that
\begin{multline*}
\int \int_{{\mathbb R}^2}\rho(\la \theta^{2+\e}t) \, e^{it(|\xi|-\la)} e^{i(s-s')\xi_1} \, \widetilde a_{\theta,\la}(t,s,s',\xi)  \, d\xi dt
\\
=O(\la^{\frac12}|s-s'|^{-\frac12}), \quad \text{if } \, \, |s-s'|\ge \la^{-1}\theta^{-2}.
\end{multline*}
But this follows from the argument that we just used for $\widetilde K_{\theta,\la}$ if, instead of \eqref{5.19}, we use the fact that for each
fixed $j=0,1,\dots$,
\begin{equation}\label{5.19'}\tag{5.19$'$}
\int_{S^1} e^{irx\cdot \omega} \,  \partial_t^j \widetilde a_{\theta,\la}(t,s,s',r\omega) \, d\omega
=O\bigl((\la|x|)^{-\frac12}\bigr), \quad \text{if } \, \, r\approx \la, \, \, \, \text{and } \, \, \la^{-1}\theta^{-2}\lesssim |x|,
\end{equation}
due to \eqref{K9'} and \eqref{K10'}.
Since the $\widetilde a_{\theta,\la}$ satisfy the bounds in \eqref{K10'}, this follows from the method of stationary phase
by slightly modifying the standard proof of \eqref{5.19}.
\end{proof}

%we note that by \eqref{K9'}, \eqref{S17} and the
%aforementioned properties of $\Psi_{\la,\theta}$, the kernels are $O(\la\theta)$,
%and so we just need to prove the inequality when $|s-s'|\ge \la^{-1}\theta^{-2}$.  For the 
%remaining case, we note that by stationary phase and \eqref{K10'} 
%and our assumption that $\theta\ge \la^{-\frac12+\delta_0}$,
%if $r\approx \la$ and $|x|\ge \la^{-1} \theta^{-2}$ we have
%$$\int_{S^{n-1}}e^{i\langle x, r\omega\rangle} a_{\la,\theta}(y,r\omega) \, d\omega
%=O\bigl((\la|x|)^{-\frac12}\bigr),$$
%with bounds independent of large $\la$ and such $\theta$.
%Therefore, since the symbol of $a_{\la,\theta}$ vanishes unless $|\xi|\approx \la$,
%if we use polar coordinates and \eqref{S17}, we conclude that our kernels are majorized for any $N$ by
%$$\bigl(\la|s-s|\bigr)^{-\frac12}\int_{r\approx \la} (\la \theta^{2+\e})^{-1} \,  \bigl(1+\la^{-1}\theta^{-2-\e}|\la-r|\bigr)^{-N} \, r \, dr
%=O(\la^{\frac12} |s-s'|)^{-\frac12}),$$
%which finishes the proof.

\section*{Acknowledgements}
With pleasure, we thank William Minicozzi and Steve Zelditch for many helpful discussions and
for generously sharing their knowledge.
%We are very grateful for helpful discussions with William Mincozzi.

\end{document}